\title[The mixed mock modularity of certain duals of Hikami and Lovejoy]{The mixed mock modularity of certain duals of generalized quantum modular forms\\ of Hikami and Lovejoy}
\author{Eric T. Mortenson}
\address{Department of Mathematics and Computer Science, Saint Petersburg State University, Saint Petersburg, 199034, Russia}
\email{etmortenson@gmail.com}
\author{Sander Zwegers}
\address{Department of Mathematics and Computer Science, University of Cologne, Weyertal 86--90, 50931, Cologne, Germany}
\email{szwegers@uni-koeln.de}
\newcommand\N{\mathbb{N}}
\newcommand\Z{\mathbb{Z}}
\newcommand\C{\mathbb{C}}
\renewcommand\theta{\vartheta}
\newcommand\smod[1]{\ (\operatorname{mod} #1)}
\newcommand\sg{\operatorname{sg}}
\newtheorem{theorem}{Theorem}
\newtheorem{lemma}[theorem]{Lemma}
\newtheorem{corollary}[theorem]{Corollary}
\theoremstyle{definition}
\newtheorem{definition}[theorem]{Definition}
\newtheorem{remark}[theorem]{Remark} 
\newtheorem{example}[theorem]{Example}
\numberwithin{theorem}{section} 
\numberwithin{equation}{section}
\begin{document}

\date{6 July 2022}

\subjclass[2020]{11F11, 11F27, 11F37}

\keywords{Appell functions, theta functions, indefinite theta series, Hecke-type double-sums, mock modular forms, quantum modular forms}

\begin{abstract}
    We express a family of Hecke--Appell-type sums of Hikami and Lovejoy in terms of mixed mock modular forms; in particular, we express the sums in terms of Appell functions and theta functions.  Hikami and Lovejoy's family of Hecke--Appell-type sums was obtained by considering certain duals of generalized quantum modular forms.
\end{abstract}

\maketitle

\section{Introduction and Statement of the Main Results}
Let $q$ be a nonzero complex number with $|q|<1$.   We recall the $q$-Pochhammer notation:
\begin{equation*}
(x)_n=(x;q)_n:=\prod_{i=0}^{n-1}(1-q^ix), \ \ (x)_{\infty}=(x;q)_{\infty}:=\prod_{i\ge 0}(1-q^ix).
\end{equation*}
We begin with the Kontsevich--Zagier series \cite{Za2001}
\begin{equation*}
F(q):=\sum_{n=0}^{\infty}(q)_n,
\end{equation*}
which is an example of a quantum modular form \cite{Za2020}.  The series $F(q)$ does not converge on any open subset of $\mathbb{C}$, but it is well-defined at any root of unity; in particular, it is a finite sum.

Bryson et al \cite{BOPR} related the Kontsevich--Zagier series to the generating function for strongly unimodal  sequences
\begin{equation*}
U(x;q)=\sum_{n= 0}^{\infty}(-xq)_n(-q/x)_nq^{n+1}.
\end{equation*}
If we denote $\zeta_N:=e^{2\pi i /N}$, they showed that
\begin{equation}
    F(\zeta_{N}^{-1}) = U(-1;\zeta_{N}).\label{equation:id-BOPR}
\end{equation}
In a sense, we can view $U(-1;q)$ as being dual to $F(q).$  Unlike the Kontsevich--Zagier series $F(q)$, the function $U(x;q)$ converges for $|q|<1.$  Identity (\ref{equation:id-BOPR}) can be interpreted in terms of $N$-colored Jones polynomials $J_{N}(K;q)$ for a knot $K$ \cite{HL,WBRL}.

Motivated by the colored Jones polynomial for the torus knot $T_{2,(2t+1)}$ at roots of unity, Hikami obtained a family of quantum modular forms generalizing $F(q)$ \cite{Hik1}, \cite[(1.8)]{HL}:
\begin{equation*}
    F_{t}(q):=q^t\sum_{k_t\ge\cdots\ge k_1\ge 0}(q)_{k_t}
    \prod_{i=1}^{t-1}q^{k_i(k_i+1)}\begin{bmatrix}k_{i+1}\\k_i \end{bmatrix}_q,
\end{equation*}
where $\begin{bmatrix}n\\k \end{bmatrix}_q$ is the $q$-binomial coefficient defined by
\begin{equation*}
 \begin{bmatrix}n\\k \end{bmatrix}_q
 :=\frac{(q)_{n}}{(q)_{k}(q)_{n-k}}.
\end{equation*}

Hikami and Lovejoy then used the perspective of quantum invariants to generalize $U(x;q)$ to a family of $q$-hypergeometric series $U_{t}(x;q)$ \cite[(1.10)]{HL}, 
so that
\cite[Theorem 1.2]{HL}:
\begin{equation}
    F_t(\zeta_{N}^{-1}) = U_t(-1;\zeta_{N}).\label{equation:id-HL}
\end{equation}
Furthermore, Hikami and Lovejoy generalized $F_{t}(q)$ and $U_{t}(x;q)$ to vector-valued forms $F_{t}^{(m)}(q)$ \cite[(5.1)]{HL} and $U_{t}^{(m)}(x;q)$ \cite[(5.19)]{HL}.  In particular, for $1\le m \le t$ they defined
\begin{equation*}
F_{t}^{(m)}(q):=q^t\sum_{k_1,\dots,k_t=0}^{\infty}(q)_{k_t}
q^{k_1^2+\cdots +k_{t-1}^2+k_m+\cdots+k_{t-1}}\prod_{i=1}^{t-1}\cdot
\begin{bmatrix}k_{i+1}+\delta_{i,m-1}\\k_i\end{bmatrix}_{q},
\end{equation*}
where $\delta_{a,b}:=1$ when $a=b$ and $\delta_{a,b}:=0$ otherwise, and 
\begin{align}
U_t^{(m)}(x;q):=q^{-t}\sum_{\substack{k_t\ge \cdots \ge k_1\ge 1\\ k_m\ge 1}}&(-xq)_{k_t-1}(-q/x)_{k_t-1}q^{k_t}\cdot \label{equation:gen-utm}\\
&\cdot \prod_{i=1}^{t-1}q^{k_i^2}\Big [ 
\begin{array} {c} k_{i+1}-k_i-i+\sum_{j=1}^{i}(2k_j+\chi(m>j))\\k_{i+1}-k_i \end{array}
\Big ]_q,\notag 
\end{align}
where $\chi(X):=1$ if $X$ is true and $\chi(X):=0$ otherwise. They then showed that the two families enjoy an identity analogous to (\ref{equation:id-HL}), see \cite[Theorem $5.5$]{HL}:
\begin{equation*}
    F_t^{(m)}(\zeta_{N}^{-1}) = U_t^{(m)}(-1;\zeta_{N}).
\end{equation*}
\begin{remark}  We point out that
\begin{equation*}
F_{t}^{(1)}(q)=F_{t}(q) \ \textup{and} \ U_{t}^{(1)}(x;q)=U_{t}(x;q).
\end{equation*}
\end{remark}

Of course, one is keen to know the (mock) modular properties of $U_{t}(x;q)$ and its vector-valued generalization $U_{t}^{(m)}(x;q). $  To this end, Hikami and Lovejoy also obtained Hecke--Appell-type sums for $U_{t}(x;q)$ and its vector-valued generalization.  They found
\begin{theorem} \cite[Theorem $5.6$]{HL} \label{theo:HL-Theorem5.6} We have
{\allowdisplaybreaks \begin{align*}
U_t^{(m)}&(-x;q) \\
&=-q^{-\frac{t}{2}-\frac{m}{2}+\frac{3}{8}}\frac{(qx)_{\infty}(q/x)_{\infty}}{(q)_{\infty}^2} \notag\\
& \ \ \ \cdot  \Big (\sum_{\substack{r,s\ge 0 \\ r\not\equiv s \pmod{2}}} -\sum_{\substack{r,s < 0 \\ r\not\equiv s \pmod{2}}} \Big )
\frac{(-1)^{\tfrac{r-s-1}{2}}q^{\tfrac{1}{8}r^2+\tfrac{4t+3}{4}rs+\tfrac{1}{8}s^2+\tfrac{1+m+t}{2}r+\tfrac{1-m+t}{2}s}}
{1-xq^{\frac{r+s+1}{2}}}\notag \\
&=-q^{-\frac{t}{2}-\frac{m}{2}+\frac{3}{8}}\frac{(qx)_{\infty}(q/x)_{\infty}}{(q)_{\infty}^2}\notag\\
& \ \ \ \cdot  \Big (\sum_{\substack{r,s,u\ge 0 \\ r\not\equiv s \pmod{2}}} +\sum_{\substack{r,s,u < 0 \\ r\not\equiv s \pmod{2}}} \Big )
(-1)^{\tfrac{r-s-1}{2}}x^uq^{\tfrac{1}{8}r^2+\tfrac{4t+3}{4}rs+\tfrac{1}{8}s^2+\tfrac{1+m+t}{2}r+\tfrac{1-m+t}{2}s+u\tfrac{r+s+1}{2}}.\notag 
\end{align*}}
\end{theorem}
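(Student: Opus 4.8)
The plan is to derive the first displayed identity from the nested $q$-hypergeometric definition \eqref{equation:gen-utm} using the Bailey-chain machinery, and then to obtain the second identity by an elementary geometric-series expansion. Note first that substituting $-x$ for $x$ in \eqref{equation:gen-utm} replaces $(-xq)_{k_t-1}(-q/x)_{k_t-1}$ by $(xq)_{k_t-1}(q/x)_{k_t-1}$, which already accounts for the prefactor $\frac{(qx)_{\infty}(q/x)_{\infty}}{(q)_{\infty}^{2}}$; the task is to evaluate the remaining $t$-fold sum.

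First I would identify the inner $(t-1)$-fold sum over $k_1,\dots,k_{t-1}$ in \eqref{equation:gen-utm} --- with its $q$-binomial coefficients, the quadratic exponent $k_1^2+\cdots+k_{t-1}^2$, and the shift contributions $\chi(m>j)$ and $k_m+\cdots+k_{t-1}$ --- as the result of iterating the Bailey chain relative to a parameter $a$, applied to a suitable seed Bailey pair $(\alpha_n,\beta_n)$: namely $m-1$ ordinary iterations, one ``twisted'' iteration at step $m$ (a shifted form of the Bailey lemma, which produces the $\delta_{i,m-1}$/$\chi(m>j)$ terms), and $t-m$ further ordinary iterations. The seed must be chosen so that $\alpha_n$ has a closed theta-type form and so that the overall normalization comes out right; a natural candidate is the Bailey pair relative to $a=1$ governed by the $q$-binomial theorem. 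After the chain is unwound the full sum equals $\sum_{k_t\ge 1}(xq)_{k_t-1}(q/x)_{k_t-1}q^{k_t}\,\beta^{(t-1)}_{k_t}$; inserting $\beta^{(t-1)}_{k_t}=\sum_{j}\frac{\alpha^{(t-1)}_j}{(q)_{k_t-j}(aq)_{k_t+j}}$ and interchanging the two summations reduces the problem to evaluating the inner sum over $k_t$ of $\frac{(xq)_{k_t-1}(q/x)_{k_t-1}q^{k_t}}{(q)_{k_t-j}(aq)_{k_t+j}}$.

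Because the Hecke-type sum in the theorem ranges over both $r,s\ge 0$ and $r,s<0$, this inner sum must first be completed to a bilateral sum; I would then apply a bilateral summation --- Bailey's very-well-poised ${}_6\psi_6$ summation, or the appropriate confluent case --- to collapse it to $\frac{(qx)_{\infty}(q/x)_{\infty}}{(q)_{\infty}^{2}}$ times a residual bilateral sum in a single index. Combined with the single closed-form sum over $j$ coming from $\alpha^{(t-1)}_j$, this produces a bilateral double sum; splitting according to the parity of the difference of the two summation indices yields the restriction $r\not\equiv s\pmod{2}$, a Hecke--Rogers type rearrangement separates the $r,s\ge 0$ and $r,s<0$ ranges, and a linear change of variables brings the exponent to $\tfrac18 r^2+\tfrac{4t+3}{4}rs+\tfrac18 s^2+\tfrac{1+m+t}{2}r+\tfrac{1-m+t}{2}s$ together with the overall factor $-q^{-t/2-m/2+3/8}$, which is the first equality. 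For the second equality: in the first sum $\tfrac{r+s+1}{2}>0$, so one expands $\frac{1}{1-xq^{(r+s+1)/2}}=\sum_{u\ge 0}x^uq^{u(r+s+1)/2}$; in the second sum $\tfrac{r+s+1}{2}<0$, so $\frac{1}{1-xq^{(r+s+1)/2}}=-\sum_{u<0}x^uq^{u(r+s+1)/2}$, and the extra minus sign cancels the minus in front of that sum, giving $+\sum_{r,s,u<0}$.

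The main obstacle I expect is the bookkeeping in the middle steps: fixing the precise seed Bailey pair and the precise form of the twisted iteration at position $m$ so that every exponent --- the quadratic part, the linear shifts in $r$ and $s$, and the normalizing power --- matches exactly, and keeping the two summation variables and their parity split straight through the reindexing. A secondary issue is justifying the passage to bilateral sums and the interchanges of summation; since $U_t^{(m)}(-x;q)$ converges for $|q|<1$ these are legitimate, but the rearrangement of the resulting indefinite (Hecke-type) double sum must be handled with care. An alternative to unwinding the chain all at once is an induction on $t$ whose inductive step is exactly one swap-and-evaluate as above; the analytic subtleties are unchanged.
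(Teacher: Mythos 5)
This statement is not proved in the paper at all: it is quoted verbatim from Hikami--Lovejoy \cite[Theorem 5.6]{HL}, so there is no internal argument to compare with, and your attempt has to be judged on its own. As it stands it is a plan rather than a proof, and the plan defers exactly the steps that carry the content. The entire first equality rests on ingredients you never specify: the seed Bailey pair, the precise ``twisted'' iteration at position $m$ that is supposed to generate the $\delta_{i,m-1}$/$\chi(m>j)$ shifts, and above all the evaluation of the inner sum over $k_t$. The sentence ``apply Bailey's ${}_6\psi_6$ summation, or the appropriate confluent case, to collapse it to $\frac{(qx)_\infty(q/x)_\infty}{(q)_\infty^2}$ times a residual bilateral sum'' is the crux, and it is not carried out; in Hikami and Lovejoy's actual argument this role is played by a conjugate Bailey pair from Lovejoy's work (the paper's reference \cite{L}), whose defining relation is what simultaneously produces the Appell-type denominator $1-xq^{\frac{r+s+1}{2}}$ and the prefactor $(qx)_\infty(q/x)_\infty/(q)_\infty^2$ --- it does not come from a ${}_6\psi_6$ evaluation of the kernel $\frac{(xq)_{k_t-1}(q/x)_{k_t-1}q^{k_t}}{(q)_{k_t-j}(aq)_{k_t+j}}$, and your passage from the unilateral sum over $k_t$ to a bilateral one is asserted, not justified. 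Likewise, nothing in the sketch verifies that the quadratic form $\tfrac18 r^2+\tfrac{4t+3}{4}rs+\tfrac18 s^2$, the linear terms $\tfrac{1+m+t}{2}r+\tfrac{1-m+t}{2}s$, and the normalization $-q^{-\frac t2-\frac m2+\frac38}$ actually emerge; you flag this as ``bookkeeping,'' but for a vector-valued family indexed by $m$ this bookkeeping is where a proof would most plausibly go wrong.

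The one part of your proposal that is complete and correct is the passage from the first displayed identity to the second: for $r,s\ge 0$ with $r\not\equiv s\pmod 2$ one has $\tfrac{r+s+1}{2}\ge 1$, so $\frac1{1-xq^{(r+s+1)/2}}=\sum_{u\ge0}x^uq^{u(r+s+1)/2}$, while for $r,s<0$ one has $\tfrac{r+s+1}{2}\le -1$, so $\frac1{1-xq^{(r+s+1)/2}}=-\sum_{u<0}x^uq^{u(r+s+1)/2}$ and the sign flip turns $-\sum_{r,s<0}$ into $+\sum_{r,s,u<0}$, exactly as stated. So your overall strategy (Bailey chain plus a bilateral/conjugate summation) is the right family of ideas and is essentially the route of the original source, but the first equality --- the substance of the theorem --- is left unproved: without the explicit seed pair, the explicit mechanism producing the denominator, and the exponent verification, the argument cannot be checked or completed as written.
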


\begin{remark} The case $m=1$ gives \cite[Theorem $1.3$]{HL} as a corollary.
\end{remark}

We recall the standard notation
\begin{equation*}\sg(r,s):=\frac{\sg(r)+\sg(s)}{2}, \ \textup{where} \ 
\sg(r)=\begin{cases} 1 & \text{if}\ r\geq 0,\\ -1 & \text{if}\ r<0.
\end{cases}\qquad 
\end{equation*}
For theta functions, we write
\begin{equation*}
\Theta(x;q):=(x)_{\infty}(q/x)_{\infty}(q)_{\infty}=\sum_{n=-\infty}^{\infty}(-1)^nq^{\binom{n}{2}}x^n.
\end{equation*}

The purpose of this paper is to consider the following functions:
\begin{definition} We define
\begin{equation*}
    g_{t,m}(x):=\sum_{r\not\equiv s\smod{2}} \sg(r,s)\, \frac{(-1)^{\frac{r-s-1}2} q^{\frac18 r^2 + \frac{4t-1}4rs+\frac18 s^2 +\frac{t+m}2r+\frac{t-m}2 s}}{1-xq^{\frac{r+s+1}2}}
\end{equation*}
and
\begin{equation*}
f_{t,m}(x):= -q^{-\frac t2-\frac m2+\frac78}\, \frac{\Theta(x;q)}{(q)_\infty^3}\, g_{t,m}(x)
\end{equation*}
\end{definition}

\begin{remark}
Theorem \ref{theo:HL-Theorem5.6} thus gives
\[ (1-x)\cdot U_t^{(m)}(-x;q)= f_{t+1,m}(x).\]
Note that we suppress $q$ and use $t$ instead of $t+1$ in $g_{t,m}(x)$ and $f_{t,m}(x)$, for convenience.
\end{remark}

To state the main result we need the following indefinite binary theta series.
In Section \ref{section:proof-main} we will actually see that $\theta_{p,m}^*$ is a holomorphic modular form of weight 1.

\begin{definition}\label{def:theta}
Let $t\in\N$ be fixed.
For $p,m\in\Z$ we define
\begin{equation*}\theta_{p,m}:=\sum_{r\not\equiv s\smod{2}} \sg(r,s)\,(-1)^{\frac{r-s-1}2} q^{\frac18 r^2 + \frac{4t-1}4rs+\frac18 s^2 +\frac{p+m}2r+\frac{p-m}2 s}
\end{equation*}
and
\begin{equation*} \theta_{p,m}^*:= q^{-\frac{m^2}{2(2t-1)}+\frac{p^2}{4t} } \theta_{p,m}.
\end{equation*}
\end{definition}

Our main result is the following:

\begin{theorem}\label{theo:main}
For $t\geq 2$ and $1\leq m<t$ we have
\begin{align*}f_{t,m}(x) &= -\frac{q^{-\frac34t-\frac m2+\frac78}}{(q)_\infty^3}\\
& \ \ \ \ \ \cdot \sum_{k\operatorname{mod} 2t} (-1)^k q^{\frac1{4t} (k-t)^2} \theta_{k-t,m} \underset{l\equiv k\smod{2t}}{\sum_{r,l\in\Z}} \sg(r,l)\, (-1)^r q^{\frac12 r^2+lr+\frac{2t-1}{4t}l^2+\frac12 r + \frac12l} x^{-r}.
\end{align*}
\end{theorem}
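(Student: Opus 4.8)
The plan is to begin from the definition $f_{t,m}(x)=-q^{-\frac t2-\frac m2+\frac78}\,\Theta(x;q)\,g_{t,m}(x)/(q)_\infty^3$ and to turn the product $\Theta(x;q)\,g_{t,m}(x)$ into the Hecke-type multi-sum on the right. Three facts drive this: the quadratic form $\tfrac18 r^2+\tfrac{4t-1}4 rs+\tfrac18 s^2$ in $g_{t,m}$ \emph{decouples} after an appropriate substitution; multiplying by $\Theta(x;q)$ removes the pole $1-xq^{(r+s+1)/2}$ at the cost of an indefinite summation; and completing the square in the resulting ``$tn^2$''-term forces a splitting modulo $2t$ that isolates a ``theta part'' equal to $\theta_{k-t,m}$ and an ``Appell part'' equal to the inner double sum of the statement.

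First I would substitute $r=n+i$, $s=n-1-i$ with $n,i\in\Z$ in the definition of $g_{t,m}(x)$. This automatically enforces $r\not\equiv s\smod{2}$, turns the denominator into $1-xq^{n}$ (as $r+s+1=2n$), gives $(-1)^{\frac{r-s-1}2}=(-1)^i$, and — via $\tfrac18 r^2+\tfrac{4t-1}4 rs+\tfrac18 s^2=\tfrac18(r-s)^2+t\,rs$ together with $\tfrac{t+m}2 r+\tfrac{t-m}2 s=\tfrac t2(r+s)+\tfrac m2(r-s)$ — splits the exponent as $t n^2+(\tfrac12-t)i^2+(\tfrac12-t+m)i+(\tfrac18-\tfrac t2+\tfrac m2)$, the $n$-quadratic and the $i$-quadratic now being uncoupled. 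Expanding the geometric series $\tfrac1{1-xq^n}=\sum_u x^u q^{un}$ in the direction dictated by the sign factor (as in the second form of Theorem~\ref{theo:HL-Theorem5.6}) introduces a variable $u$, and multiplying by $\Theta(x;q)=\sum_\mu(-1)^\mu q^{\binom{\mu}{2}}x^\mu$ a variable $\mu$; writing $x^{u+\mu}=x^{-r}$ (so that the negative powers of $x$ of the statement appear) presents $\Theta(x;q)g_{t,m}(x)$ as an indefinite four-fold sum in $n,i,u,\mu$.

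Now for the heart of the matter. Completing the square gives $t n^2+un=\tfrac1{4t}(2tn+u)^2-\tfrac1{4t}u^2$, and the leftover $-\tfrac1{4t}u^2$ combines with the $\tfrac12 u^2$ produced by $\binom{\mu}{2}$ (after $\mu=-(u+r)$) to form exactly $\tfrac{2t-1}{4t}u^2$, the coefficient of $l^2$ in the inner sum. One then splits by the residue $k\equiv u\smod{2t}$: writing $2tn+u=t(2\beta+1)+(k-t)$ with $\beta\in\Z$, the part of the exponent built from $n$ and $i$ becomes, after the analogous change of variables in $\theta_{k-t,m}$, exactly the exponent of $\theta_{k-t,m}$ — this being $g_{t,m}$'s quadratic form with $k-t$ in place of $t$ in the linear part — plus the constant $\tfrac{(k-t)^2}{4t}$, while the part built from $u$ and $r$ becomes $\tfrac12 r^2+ur+\tfrac{2t-1}{4t}u^2+\tfrac12 r+\tfrac12 u$, exactly the exponent of the inner double sum with $l=u$ (so that $l\equiv k\smod{2t}$). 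The signs match as well: $(-1)^{i+\mu}=(-1)^i\,(-1)^{u+\mu}\,(-1)^u$, in which $(-1)^i$ is the sign carried by $\theta_{k-t,m}$, $(-1)^{u+\mu}=(-1)^r$ is the sign $(-1)^r$ in the inner double sum, and $(-1)^u=(-1)^k$ (because $2t$ is even) is the $(-1)^k$ out front. Collecting the constant $q$-powers ($\tfrac18-\tfrac t2+\tfrac m2$ from the substitution, $\tfrac{(k-t)^2}{4t}$, and the remainder) reproduces the prefactor $-q^{-\frac34 t-\frac m2+\frac78}/(q)_\infty^3$ and the weight $q^{\frac1{4t}(k-t)^2}$; restoring the normalization then gives the theorem.

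I expect the real obstacle to be the region-and-sign bookkeeping in this last step. The indefinite summation region coming out of $g_{t,m}(x)$ — in which $u$ has a fixed sign while $r$ (through $\mu$) runs freely — does \emph{not} coincide term-by-term with the region $\{\sg(r,s)\ne0\}\times\{\sg(r,l)\ne0\}$ forced by the factored right-hand side. One must therefore show that the discrepancy between the two regions contributes nothing — a manipulation of Zwegers' indefinite-theta type, in which one trades one defining half-plane for another at the cost of an explicit theta-function correction and checks that all such corrections cancel — and that the $n$-dependent ranges of the inner $i$-sum unfold correctly against the full range of $\mu$. The rest is a change of variables plus lengthy but routine computation; the indefinite-theta surgery is where the content sits.
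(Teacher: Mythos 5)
Your computation of the decoupled exponent, the sign bookkeeping, and the completion of the square modulo $2t$ are all correct as formal manipulations, but the proposal has a genuine gap exactly where you place it yourself: the claim that the coupled summation region can be traded for the product structure of the right-hand side ``at the cost of corrections that cancel'' is asserted, not proved, and it is the entire content of the theorem. After the substitution $r=n+i$, $s=n-1-i$, the factor $\sg(r,s)$ forces $-n\le i\le n-1$ with $n\ge 1$ (resp.\ the mirror range with $n\le -1$), and the direction of the geometric expansion in $u$ is tied to the sign of $n$; so your four-fold sum in $n,i,u,\mu$ lives on a region in which $i$ and $u$ are both coupled to $n$, while the target is a product of $\theta_{k-t,m}$ (its own $\sg$-constrained sum) with an inner sum constrained by $\sg(r,l)$. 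Exponents matching under a change of variables does not make the regions match, and such region swaps for indefinite quadratic forms generically produce nonvanishing theta-quotient correction terms --- compare Theorem \ref{theo:general-hecke} in this very paper, whose third term is precisely the obstruction to naively decoupling an indefinite region. Declaring that ``all such corrections cancel'' is circular: that cancellation is equivalent to the identity you are trying to prove. A related warning sign is that your argument never uses the hypothesis $1\le m<t$, which is essential (it is what makes the inhomogeneous terms land in the correct residue classes); a derivation in which it plays no role cannot be complete as stated.

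For comparison, the paper avoids this region surgery entirely. It first proves a functional equation for $f_{t,m}$ under $x\mapsto qx$ (Lemma \ref{lem:lem2}), in which the theta functions $\theta_{k-t,m}$ appear as inhomogeneous terms; since $f_{t,m}$ has no poles, it expands $f_{t,m}$ in a Laurent series, translates the functional equation into a recurrence for the coefficients $a_r$, and solves that recurrence uniquely using $\lim_{r\to\pm\infty}a_r=0$ together with the summed identity \eqref{eq:sum} --- this is the step where $1\le m<t$ enters. The resummation of the explicit solution then yields the stated product form after routine changes of variables. If you want to rescue your direct approach, the missing piece is an honest indefinite-theta decoupling argument (in the spirit of Theorem \ref{theo:general-hecke} or of Zwegers' completions) showing the boundary contributions vanish for $1\le m<t$; without it, the proposal is a plausible plan rather than a proof.
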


What is new about Theorem \ref{theo:main} is that it expresses the weight-two object
\begin{equation*}
   (q)_{\infty}^{3} \cdot (1-x) \cdot U_{t}^{(m)}(-x;q) 
\end{equation*}
as a mixed mock modular form \cite{DMZ}. For our purposes, a mixed mock modular form is essentially a finite linear combination of products of theta functions and Appell functions, see Corollary \ref{cor:main}. 

In Section \ref{section:examples}, we will write Theorem \ref{theo:main} in terms of Hecke-type double-sums as studied by the first author and Hickerson \cite[(1.3), (1.4)]{HM}:
\begin{equation}
f_{a,b,c}(x,y;q):=\sum_{r,s}\sg(r,s) (-1)^{r+s}x^ry^sq^{a\binom{r}{2}+brs+c\binom{s}{2}},\label{equation:fabc-def}
\end{equation}
where $a$, $b$, $c$ are positive integers.  In \cite[Theorems 1.3 and 1.4]{HM}, sums of the form (\ref{equation:fabc-def}) were expressed in terms of Appell functions and theta functions.  Here we will use the following (modified) definition of an Appell function \cite{HM}
\begin{equation*}
m(x,z;q):=\frac{1}{\Theta(z;q)}\sum_{r=-\infty}^{\infty}\frac{(-1)^rq^{\binom{r}{2}}z^r}{1-q^{r-1}xz}.
\end{equation*}

The results in \cite{HM} were for general Hecke-type double-sums which involved certain symmetries.  Expansions were found for sums of the form
\begin{equation}
f_{n,n+p,n}(x,y;q) \ \textup{and} \ f_{a,b,c}(x,y;q),\label{equation:fabc-forms}
\end{equation}
where for the first $n$ and $p$ are positive integers, and for the second $a$, $b$, and $c$ are positive integers, both $a$ and $c$ divide $b$, and $b^2-ac>0$.  As an example, a special case of more general results \cite[Theorem 1.3]{HM} reads
\begin{align*}
f_{1,2,1}(x,y;q)
&=\Theta(y;q)m\Big (\frac{q^2x}{y^2},-1;q^3\Big )+\Theta(x;q)m\Big (\frac{q^2y}{x^2},-1;q^3\Big )\\
& \ \ \ \ \ \ \ \ \ \ - y\cdot \frac{(q^3;q^3)_{\infty}^3\Theta(-x/y;q)\Theta(q^2xy;q^3)}
{\Theta(-1;q^3)\Theta(-qy^2/x;q^3)\Theta(-qx^2/y;q^3)}.\notag 
\end{align*}

Unfortunately, some of the double-sums appearing in Theorem \ref{theo:main} (see also Corollary \ref{cor:main}) are not of the two types found in (\ref{equation:fabc-forms}), so the results of \cite[Theorems 1.3, 1.4]{HM} do not apply.  To evaluate such double-sums appearing in Theorem \ref{theo:main}, we will need the following more general result.

\begin{theorem} \label{theo:general-hecke}
Let $a,b,c$ be positive integers such that $D:=b^2-ac>0$.  For generic $x_1$, $x_2$, $y_1$, $y_2$, we have:
\[\begin{split}
\sum_{r_1,s_1\in\Z} &\sg(r_1,s_1)\, q^{\frac12a r_1^2+br_1s_1+\frac12cs_1^2}\, x_1^{r_1}y_1^{s_1}\\
&=\sum_{t=0}^{c-1} x_1^tq^{\frac12at^2} 
\cdot \Theta(-y_1q^{bt+\frac{c}{2}};q^c)
\cdot m(-q^{D(\frac{c}{2}-t)}x_1^cy_1^{-b},-q^{\frac12 cD}x_2;q^{cD})\\
&\qquad + \sum_{t=0}^{a-1}  y_1^t q^{\frac12ct^2}
\cdot \Theta(-x_1q^{bt+\frac{a}{2}};q^{a})
\cdot m(-q^{D(\frac{a}{2}-t)}y_{1}^{a}x_{1}^{-b},-q^{\frac12 aD}y_2;q^{aD})\\
&\qquad -\Big ( \Theta(-x_2q^{\frac12 cD};q^{cD})
\cdot \Theta(-y_2q^{\frac12 aD};q^{aD})\Big ) ^{-1}
\sum_{u,v\smod{b}} q^{\frac12au^2+buv+\frac12cv^2} x_1^{u}y_1^{v} \\
&\qquad\qquad \cdot \sum_{r_2,s_2\in\Z} q^{\frac12 b^2c r_2^2+abc r_2 s_2+\frac12ab^2s_2^2 +c(au+bv)r_2+a(bu+cv)s_2}(x_1^cx_2)^{r_2} (y_1^ay_2)^{s_2}\\
& \qquad\qquad\cdot \frac{(q^{bD};q^{bD})_{\infty}^3\, \Theta(x_1^{c-b}y_1^{a-b} x_2y_2 q^{-D(u+v)};q^{bD})}{\Theta(x_1^cy_1^{-b}x_2q^{-Du};q^{bD})\Theta(x_1^{-b}y_1^a y_2 q^{-Dv};q^{bD})}.
\end{split}\]
\end{theorem}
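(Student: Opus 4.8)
The plan is to reduce Theorem \ref{theo:general-hecke} to the already-known one-parameter-family results of \cite[Theorems 1.3 and 1.4]{HM} by a change of summation variables that diagonalizes the quadratic form $Q(r,s) = \tfrac12 a r^2 + b rs + \tfrac12 c s^2$ into a pair of forms whose cross term $b$ divides both ``diagonal'' coefficients. The key observation is that although the original $f_{a,b,c}$ need not satisfy the divisibility hypotheses of \cite{HM}, the \emph{blown-up} form does: setting $r = u + c r_2 + \cdots$ and $s = v + a s_2 + \cdots$ for $u,v$ running modulo $b$ and $r_2,s_2\in\Z$ produces a double sum in $(r_2,s_2)$ governed by the form with coefficients $(b^2 c,\, abc,\, ab^2)$, in which $a$-times-$b^2$ and $b^2$-times-$c$ are both visibly divisible by the cross coefficient $abc$ — wait, more precisely one rescales so that the new triple $(a',b',c') = (b^2c, abc, ab^2)$ has $a' \mid b'$ and $c' \mid b'$ after dividing through by the common factor, putting it squarely in the second family of \eqref{equation:fabc-forms}. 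So the first step is to carry out this splitting of $\Z^2$ into residue classes and check that $\sg(r_1,s_1)$ decomposes compatibly: on the ``bulk'' part of the lattice the sign function is eventually constant, and the boundary contributions (where $r_1$ or $s_1$ is small) are exactly what assemble into the two $\Theta\cdot m$ terms on the right-hand side.

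The key steps, in order, would be: (1) Fix residues $u,v \bmod b$ and write $r_1 = c r_2 + (\text{correction in } u,v)$, $s_1 = a s_2 + (\text{correction})$, so that $Q(r_1,s_1)$ becomes $Q'(r_2,s_2)$ plus linear-in-$(r_2,s_2)$ terms plus a constant depending on $u,v$; track the powers of $q$, $x_1$, $y_1$ carefully to match the exponents $c(au+bv)$, $a(bu+cv)$ and the monomials $(x_1^c x_2)^{r_2}$, $(y_1^a y_2)^{s_2}$ in the statement — the extra variables $x_2,y_2$ enter precisely through the $m$-function's second argument and its quasi-periodicity. (2) Apply the appropriate theorem of \cite{HM} to the inner $(r_2,s_2)$ double sum, which is of the admissible type $f_{a',b',c'}$ with $a'\mid b'$, $c'\mid b'$; this produces two Appell-function pieces and one theta-quotient piece. (3) Sum over $u,v \bmod b$: the theta-quotient pieces assemble into the last (triple) term of the statement, while the Appell pieces collapse — via the splitting of a modulus-$cD$ sum into modulus-$c$ times modulus-$D$, and the defining series of $m(x,z;q)$ — into the first two lines $\sum_{t=0}^{c-1}\cdots$ and $\sum_{t=0}^{a-1}\cdots$. (4) Handle the sign function: verify that $\sg(r_1,s_1)$, after the substitution, equals $\sg(r_2,s_2)$ except on finitely many residue classes, and that the discrepancy is a convergent (definite) theta series — this is where the $\Theta(-y_1 q^{bt+c/2};q^c)$ and $\Theta(-x_1 q^{bt+a/2};q^a)$ factors come from, i.e. from the ``missing'' half-lines $r_1 \geq 0 > s_1$ and $s_1 \geq 0 > r_1$ being repackaged.

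I expect the main obstacle to be step (4) together with the exact bookkeeping in step (1): making the sign-function decomposition rigorous requires carefully isolating which $(u,v,r_2,s_2)$ lattice points have $\sg(r_1,s_1)\neq \sg(r_2,s_2)$, showing this exceptional set is a union of one-dimensional sublattices on which the resulting sum is absolutely convergent (so that the non-convergent ``Hecke-type'' part is genuinely only the $f_{a',b',c'}$ piece), and then recognizing those convergent pieces as the stated theta products. The constants and half-integer shifts (the $q^{c/2}$, $q^{a/2}$, $q^{D(c/2 - t)}$, $q^{cD/2}$ factors) must all come out exactly right, which is a delicate but mechanical computation; conceptually, though, everything is driven by the single idea of passing to the sublattice $c\Z \times a\Z$ (shifted by residues mod $b$) on which the naive Hecke form becomes one of Hickerson–Mortenson's tractable types. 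A cleaner alternative, which I would pursue in parallel, is to verify the identity by checking that both sides satisfy the same functional equations in $x_1$ (and $x_2$) — both are meromorphic in $x_1$ with the same poles and the same behavior under $x_1 \mapsto q x_1$ — together with agreement of finitely many ``initial'' Fourier coefficients; this avoids the sign-function surgery at the cost of an elliptic-transformation argument for the right-hand side, whose $m$-function and theta-quotient pieces have well-documented transformation laws in \cite{HM}.
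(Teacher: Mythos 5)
Your reduction hinges on step (2), and that step rests on a divisibility claim which is false in general. After your blow-up the inner $(r_2,s_2)$-form has coefficients proportional to $(b^2c,\,abc,\,ab^2)$, and the admissibility conditions of \cite[Theorems 1.3, 1.4]{HM} for this triple read $b^2c\mid abc$ and $ab^2\mid abc$, i.e. $b\mid a$ and $b\mid c$; dividing out the gcd does not rescue this (e.g. $(a,b,c)=(1,2,3)$ gives $(12,6,4)$, reduced $(6,3,2)$, which satisfies neither divisibility, and the family $f_{n,n+p,n}$ is only available when $a=c$). So the blown-up sum is not ``squarely in the second family'' of (\ref{equation:fabc-forms}), and the whole point of Theorem \ref{theo:general-hecke}, as the introduction states, is precisely that the results of \cite{HM} do not apply to the sums at hand (such as $f_{1,2t,2t(2t-1)}$). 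Two further symptoms of the same problem: in the identity to be proved, the $(r_2,s_2)$-sum in the last term is \emph{positive definite} (its discriminant is $-ab^2cD<0$), so no appeal to an indefinite-sum evaluation is needed or possible there; and your outline never explains how the free parameters $x_2,y_2$, which do not occur on the left-hand side, would emerge from \cite{HM}, whose Appell terms come with fixed second arguments (one would at least need the change-of-$z$ theta-quotient identities, which you do not invoke). Your step (4), the sign bookkeeping, is exactly the hard point and is left heuristic (``eventually constant on the bulk''); as stated it is not a proof.

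The paper proceeds quite differently, and the missing idea is an exact, pointwise splitting of the sign factor by means of auxiliary variables: one multiplies the left-hand side by the two unary theta series $\sum_{r_2}q^{\frac12 cDr_2^2}x_2^{r_2}$ and $\sum_{s_2}q^{\frac12 aDs_2^2}y_2^{s_2}$ (this is where $x_2,y_2$ enter) and uses
\[\sg(r_1,s_1)=\sg(r_1,cr_2-r_1)+\sg(s_1,as_2-s_1)-\sg(cr_2-r_1,as_2-s_1),\]
which holds identically since $\sg(r,s)=\tfrac12(\sg(r)+\sg(s))$. The first two pieces are evaluated with $\sum_{k}\sg(k,n)z^kq^{nk}=1/(1-zq^n)$ and give the two $\Theta\cdot m$ lines; the third piece is evaluated with $\sum_{k,l}\sg(k+\gamma_1,l+\gamma_2)q^{kl}z_2^kz_1^l=(q;q)_\infty^3\,\Theta(z_1z_2;q)/\bigl(\Theta(z_1;q)\Theta(z_2;q)\bigr)$ and gives the theta-quotient term; finally the temporary convergence restrictions on $x_1^cy_1^{-b}x_2$ and $x_1^{-b}y_1^ay_2$ are removed by the identity theorem and one divides by the two unary theta series. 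Your alternative plan (functional equations in $x_1$ plus finitely many coefficients) is in principle viable --- compare how Lemma \ref{lem:lem2} is exploited in the proof of Theorem \ref{theo:main} --- but it is not carried out, and note that the left-hand side satisfies an \emph{inhomogeneous} $x_1\mapsto qx_1$ equation with a theta correction term, so matching it on the right-hand side is itself a substantial computation. As written, the proposal has a genuine gap at its central step.
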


In Section \ref{section:proof-main}, we prove Theorem \ref{theo:main} and give an example for $t=1$.  In Section \ref{section:proof-hecke} we prove Theorem \ref{theo:general-hecke}.  In Section \ref{section:proof-fabc}, we use Theorem \ref{theo:general-hecke} to expand the general form of a Hecke-type double-sum (\ref{equation:fabc-def}) in terms of theta functions and Appell functions, similar to \cite[Theorems 1.3, 1.4]{HM}.   In Section \ref{section:examples}, we give examples where we use Theorem \ref{theo:main} to express the dual functions $U_{t}^{(m)}(-x;q)$ for $t=2,3,$ in terms of Hecke-type double-sums (\ref{equation:fabc-def}), which can be evaluated in terms of theta functions and Appell functions by using Theorem \ref{theo:general-hecke}.   In Section \ref{section:proof-thetaless}, we prove a theta-less version of Theorem \ref{theo:general-hecke}.  In Section \ref{equation:f123-mxzq}, we derive an Appell function form for $U_{1}(-x;q)$.

\section{Proof of Theorem \ref{theo:main}}\label{section:proof-main}
First we consider the theta functions $\theta_{p,m}$ and $\theta_{p,m}^*$ as in Definition \ref{def:theta}.

\begin{lemma}\label{lem:theta}
We have
\begin{enumerate}
\item[\textnormal{(a)}] $\theta_{p,m}=q^{p+t}\, \theta_{p+2t,m}$ and $\theta_{p+2t,m}^* = \theta_{p,m}^*$;
\item[\textnormal{(b)}] $\theta_{p,m}=-q^{-m-t+\frac12}\, \theta_{p,m+2t-1}$ and $\theta_{p,m+2t-1}^* = -\theta_{p,m}^*$;
\item[\textnormal{(c)}] $\theta_{-p,m}= \theta_{p,-m}=-\theta_{p,m}$ and $\theta_{-p,m}^*=\theta_{p,-m}^*= -\theta_{p,m}^*$;
\item[\textnormal{(d)}] $\theta_{p,m}^*$ is a holomorphic modular form of weight 1.
\end{enumerate}
\end{lemma}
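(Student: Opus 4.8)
The plan is to prove parts (a), (b), (c) for the bare series $\theta_{p,m}$ by an affine reindexing of the summation lattice $\{(r,s)\in\Z^2:r\not\equiv s\smod{2}\}$, to deduce the $\theta^*_{p,m}$-versions from these by bookkeeping of the normalising $q$-power, and finally to obtain (d) by viewing $\theta^*_{p,m}$ through the lens of indefinite theta functions. Write $E_{p,m}(r,s):=\tfrac18 r^2+\tfrac{4t-1}4 rs+\tfrac18 s^2+\tfrac{p+m}2 r+\tfrac{p-m}2 s$ for the $q$-exponent of the $(r,s)$-term of $\theta_{p,m}$.

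For (a) I would first locate the (unique, since $t\neq\tfrac12$) affine substitution carrying the linear part of $E_{p+2t,m}$ onto that of $E_{p,m}$; solving the $2\times2$ system forces the shift $(r,s)\mapsto(r-1,s-1)$, under which $E_{p+2t,m}(r-1,s-1)=E_{p,m}(r,s)-p-t$, while the sign $(-1)^{(r-s-1)/2}$ and the condition $r\not\equiv s\smod{2}$ are unchanged. Reindexing the sum for $\theta_{p+2t,m}$ accordingly gives
\[\theta_{p,m}-q^{p+t}\theta_{p+2t,m}=\sum_{r\not\equiv s\smod{2}}\bigl(\sg(r,s)-\sg(r-1,s-1)\bigr)(-1)^{\frac{r-s-1}{2}}q^{E_{p,m}(r,s)}.\]
Since $\sg(n)-\sg(n-1)=2[n=0]$, this collapses to two one-variable edge sums, along $r=0$ and along $s=0$, each of the form $\sum_{n\ \mathrm{odd}}(-1)^{(n-1)/2}q^{\frac18 n^2+\beta n}$; such a sum vanishes because the reflection of $n$ about the vertex of $\tfrac18 n^2+\beta n$ is a fixed-point-free involution on the odd integers that reverses $(-1)^{(n-1)/2}$. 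Hence $\theta_{p,m}=q^{p+t}\theta_{p+2t,m}$, and the identity $\theta^*_{p+2t,m}=\theta^*_{p,m}$ then drops out of $\tfrac{(p+2t)^2}{4t}=\tfrac{p^2}{4t}+p+t$.

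Parts (b) and (c) use the same engine. For (b) the matching substitution is $(r,s)\mapsto(r+1,s-1)$, which sends $E_{p,m+2t-1}(r+1,s-1)=E_{p,m}(r,s)+m+t-\tfrac12$ and flips the character (because $(-1)^{(r-s+1)/2}=-(-1)^{(r-s-1)/2}$); the edge corrections, now along $r=-1$ and $s=0$, again vanish by a fixed-point-free sign-reversing reflection, and $\theta^*_{p,m+2t-1}=-\theta^*_{p,m}$ follows from $\tfrac{(m+2t-1)^2}{2(2t-1)}=\tfrac{m^2}{2(2t-1)}+m+t-\tfrac12$. For (c) I would instead use the two automorphisms of $E_{p,m}$: $(r,s)\mapsto(s,r)$ sends $E_{p,m}\mapsto E_{p,-m}$ and, flipping the character, yields $\theta_{p,-m}=-\theta_{p,m}$ with no edge terms; $(r,s)\mapsto(-r,-s)$ sends $E_{-p,m}\mapsto E_{p,-m}$ and, via $\sg(-n)=-\sg(n)+2[n=0]$, yields $\theta_{-p,m}=-\theta_{p,-m}$ up to edge sums that vanish as before. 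Combining gives $\theta_{-p,m}=\theta_{p,-m}=-\theta_{p,m}$, and the $\theta^*$-statements are immediate since the normalising power is even in $p$ and even in $m$.

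For (d), the plan is to recognise $\theta_{p,m}$ as an indefinite binary theta series of signature $(1,1)$: the form $8Q(r,s)=r^2+(8t-2)rs+s^2$ has positive discriminant $32t(2t-1)$, the weighting $\sg(r,s)=[r\ge0]+[s\ge0]-1$ cuts out the cone determined by the two negative-norm vectors $c_1=(1,-(4t-1))$ and $c_2=(-(4t-1),1)$, and $(-1)^{(r-s-1)/2}$ together with the parity condition is an admissible quadratic twist. Zwegers' theory attaches to such a series a real-analytic completion whose shadow is a prescribed combination of the unary theta series associated to $c_1$ and $c_2$; here $c_1$ and $c_2$ are interchanged by the symmetry $(r,s)\mapsto(s,r)$ exploited in (c), so the shadow is itself of edge-sum type and is annihilated by the same fixed-point-free-involution argument used above, hence $\theta_{p,m}$ is already holomorphic. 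The transformation formula for the indefinite theta series then gives weight $1=2\cdot\tfrac12$, the normalising factor in $\theta^*_{p,m}$ being exactly the $q$-power that completes the squares in the $r\pm s$ directions (equivalently, diagonalises $Q$ under $r=\kappa+\lambda+1$, $s=\kappa-\lambda$), so that one lands on a genuine holomorphic modular form rather than a multiplier-twisted object. I expect this last point — identifying the indefinite-theta data precisely enough that Zwegers' machinery applies and that the potential shadow is \emph{visibly} zero — to be the main obstacle; once it is in place, parts (a)--(c) are routine.
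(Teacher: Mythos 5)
Your proposal follows essentially the same route as the paper: parts (a)--(c) are proved by the same affine reindexings of the lattice together with the vanishing of the one-variable ``edge'' sums (this vanishing is exactly the paper's Remark 2.2, proved by your sign-reversing reflection), and part (d) is the paper's argument as well -- complete the square, view $\theta_{p,m}^*$ as a signature $(1,1)$ indefinite theta in Zwegers' framework, and observe that the non-holomorphic correction terms (the $\beta$-pieces) vanish by the same reflection, so the holomorphic series already transforms with weight $1$; note the paper kills each correction term separately via the one-variable vanishing for fixed $\tilde r$ (resp.\ $\tilde s$), rather than by a cancellation between the two cone vectors. One small slip in (c): under $(r,s)\mapsto(-r,-s)$ the character $(-1)^{\frac{r-s-1}{2}}$ flips in addition to $\sg$, so that step yields $\theta_{-p,m}=\theta_{p,-m}$ (up to vanishing edge sums), not $-\theta_{p,-m}$; since your concluding chain $\theta_{-p,m}=\theta_{p,-m}=-\theta_{p,m}$ uses the corrected sign, the result stands (the paper gets the same relation in one step via $(r,s)\mapsto(-s,-r)$, under which the character is unchanged).
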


\begin{remark}\label{firstremark}
For $s\in\Z$ fixed we have
\[\underset{r\not\equiv s\smod{2}}{\sum_{r\in\Z}} (-1)^{\frac{r-s-1}2} q^{\frac18 r^2 + \frac{4t-1}4rs+\frac18 s^2 +\frac{p+m}2r+\frac{p-m}2 s}=0.\]
To see this replace $r$ by $-r-2(4t-1)s-4(p+m)$ in the summation.
In a similar way we have
\[\underset{s\not\equiv r\smod{2}}{\sum_{s\in\Z}} (-1)^{\frac{r-s-1}2} q^{\frac18 r^2 + \frac{4t-1}4rs+\frac18 s^2 +\frac{p+m}2r+\frac{p-m}2 s}=0\]
for fixed $r\in\Z$.
\end{remark}

\begin{proof}[Proof of Lemma \ref{lem:theta}]
(a) Using $\sg(r,s)=\sg(r-1,s-1)+\delta(r)+\delta(s)$, where
\begin{equation*}
\delta(r):=\begin{cases} 1 &\text{if}\ r=0,\\ 0 &\text{otherwise,}\end{cases}
\end{equation*}
and Remark \ref{firstremark} we find
\[\theta_{p,m}=\sum_{r\not\equiv s\smod{2}} \sg(r-1,s-1)\,(-1)^{\frac{r-s-1}2} q^{\frac18 r^2 + \frac{4t-1}4rs+\frac18 s^2 +\frac{p+m}2r+\frac{p-m}2 s}\]
and so replacing $(r,s)$ by $(r+1,s+1)$ we obtain
\[\theta_{p,m}= \sum_{r\not\equiv s\smod{2}} \sg(r,s)\,(-1)^{\frac{r-s-1}2} q^{\frac18 r^2 + \frac{4t-1}4rs+\frac18 s^2 +\frac{p+2t+m}2r+\frac{p+2t-m}2 s+p+t}=q^{p+t}\,\theta_{p+2t,m},\]
from which the second result immediately follows.\\
(b) In a similar way we get these identities by replacing $(r,s)$ by $(r-1,s+1)$.\\
(c) Replacing $(r,s)$ by $(-s,-r)$ and using
\[\sg(-s,-r)=\sg(-r,-s) =\sg(-r-1,-s-1)+\delta(-r)+\delta(-s)= -\sg(r,s)+\delta(r)+\delta(s)\]
and Remark \ref{firstremark} we get
\[\theta_{p,m}= -\sum_{r\not\equiv s\smod{2}} \sg(r,s)\,(-1)^{\frac{r-s-1}2} q^{\frac18 r^2 + \frac{4t-1}4rs+\frac18 s^2 +\frac{-p+m}2r+\frac{-p-m}2 s}=-\theta_{-p,m},\]
which also directly gives $\theta_{-p,m}^*= -\theta_{p,m}^*$.
Further, $\theta_{p,-m}=-\theta_{p,m}$ and $\theta_{p,-m}^*= -\theta_{p,m}^*$ follow directly by replacing $(r,s)$ by $(s,r)$.\\
(d) We have
\[ \begin{split}
\frac18 \Bigl( r+ \frac p{2t} -\frac m{2t-1} \Bigr)^2 &+\frac{4t-1}4 \Bigl( r+ \frac p{2t} -\frac m{2t-1} \Bigr)\Bigl( s+ \frac p{2t} +\frac m{2t-1} \Bigr) + \frac18 \Bigl( s+ \frac p{2t} +\frac m{2t-1} \Bigr)^2\\
&= \frac18r^2+ \frac{4t-1}4rs + \frac18 s^2 +\frac{p+m}2r+\frac{p-m}2s -\frac{m^2}{2(2t-1)}+\frac{p^2}{4t},
\end{split}\]
so using Remark \ref{firstremark} we can write $\theta_{p,m}^*$ as
\[ \theta_{p,m}^*=\underset{\tilde r-\tilde s +\frac{2m}{2t-1} \equiv 1 \smod{2}}{\sum_{\tilde r \in \frac p{2t}-\frac m{2t-1}+\Z,\, \tilde s \in \frac p{2t}+\frac m{2t-1}+\Z}} \frac12\bigl( \operatorname{sgn}(\tilde r)+\operatorname{sgn}(\tilde s)\bigr) \, (-1)^{\frac12(\tilde r-\tilde s+\frac{2m}{2t-1} -1)}\,q^{\frac18 \tilde r^2 +\frac{4t-1}4 \tilde r\tilde s +\frac18 \tilde s^2}.\]
From \cite{Zw} we get that this holomorphic theta function can be completed to a non-holomor\-phic theta function that transforms as a modular form of weight 1 on some subgroup of $\operatorname{SL}_2(\Z)$.
For this we have to replace $\operatorname{sgn}(\tilde r)$ by $E\bigl(2\sqrt{t(2t-1)}\,\tilde r y^{1/2}\bigr)$ and $\operatorname{sgn}(\tilde s)$ by $E\bigl(2\sqrt{t(2t-1)}\,\tilde s y^{1/2}\bigr)$, where $q=e^{2\pi i\tau}$, $y=\operatorname{Im}(\tau)>0$ and $E(z)=2\int_0^z e^{-\pi u^2} du$.
Now using $E(z)=\operatorname{sgn}(z)- \operatorname{sgn}(z) \beta(z^2)$ (Lemma 1.7 in \cite{Zw}) with $\beta(x) = \int_x^\infty u^{-1/2} e^{-\pi u} du$, and again Remark \ref{firstremark} we can easily see that the parts coming from $\beta$ vanish and so the holomorphic and the non-holomorphic version actually agree.
Hence $\theta_{p,m}^*$ (considered as a function of $\tau$) transforms as a holomorphic modular form of weight 1 on some subgroup.
\end{proof}

Next we establish a functional equation for both $g_{t,m}$ and $f_{t,m}$.

\begin{lemma}\label{lem:lem2}
We have
\[g_{t,m}(qx)= x^{2t}q^t g_{t,m}(x) +x^{t-m+1} q^{\frac 18 -\frac m2 +\frac t2} \frac{(q)_\infty^3}{\Theta(x;q)}(1-x^{2m}q^m)+ \sum_{k=0}^{2t-1} x^k q^{\frac k2} \theta_{k-t,m}\]
and
\[f_{t,m}(qx)=-x^{2t-1}q^t f_{t,m}(x)+x^{t-m}q^{-m+1}(1-x^{2m}q^m) +x^{-1} q^{-\frac m2 + \frac 78}\, \frac{\Theta(x;q)}{(q)_\infty^3} \sum_{k=0}^{2t-1} x^k q^{\frac{k-t}2} \theta_{k-t,m}.\]
\end{lemma}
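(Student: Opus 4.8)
The plan is to derive both functional equations by shifting summation indices in the defining series; the one for $f_{t,m}$ will follow quickly from the one for $g_{t,m}$.

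\emph{Functional equation for $g_{t,m}$.} Since the denominator in $g_{t,m}(qx)$ is $1-xq^{\frac{r+s+3}2}$, I would replace $(r,s)$ by $(r-1,s-1)$ — which preserves $r\not\equiv s\smod{2}$ — to restore $1-xq^{\frac{r+s+1}2}$. Writing $E(r,s):=\frac18 r^2+\frac{4t-1}4rs+\frac18 s^2+\frac{t+m}2r+\frac{t-m}2s$ for the $q$-exponent of $g_{t,m}$, one checks $E(r-1,s-1)=E(r,s)-t(r+s)$, and, just as in the proof of Lemma \ref{lem:theta}(a), $\sg(r-1,s-1)=\sg(r,s)-\delta(r)-\delta(s)$, with $\delta$ as there. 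Setting $n:=\frac{r+s+1}2$, so that $q^{-t(r+s)}=q^tq^{-2tn}$, and applying the finite geometric identity $\frac{q^{-2tn}}{1-xq^n}=\frac{x^{2t}}{1-xq^n}+\sum_{k=0}^{2t-1}x^kq^{(k-2t)n}$, the part of $g_{t,m}(qx)$ arising from $\sg(r,s)$ becomes $x^{2t}q^tg_{t,m}(x)$ plus $q^t\sum_{k=0}^{2t-1}x^k\sum_{r\not\equiv s}\sg(r,s)(-1)^{\frac{r-s-1}2}q^{E(r,s)+(k-2t)n}$. In the inner sum the linear exponent is $\frac{(k-t)+m}2r+\frac{(k-t)-m}2s$ together with the constant $\frac{k-2t}2$, so by Definition \ref{def:theta} it equals $q^{\frac{k-2t}2}\theta_{k-t,m}$, and this whole portion is $x^{2t}q^tg_{t,m}(x)+\sum_{k=0}^{2t-1}x^kq^{\frac k2}\theta_{k-t,m}$.

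There remain the two contributions from $-\delta(r)-\delta(s)$: $\delta(r)$ forces $r=0$ (and $s$ odd), while $\delta(s)$ forces $s=0$ (and $r$ odd). Writing the odd index as $2j+1$ and reindexing in $j$, these reduce to $-q^{\frac18+\frac{t+m}2}\sum_{j\in\Z}\frac{(-1)^jq^{\binom{j}{2}}q^{-(t+m)j}}{1-xq^j}$ and $q^{\frac18+\frac{t-m}2}\sum_{j\in\Z}\frac{(-1)^jq^{\binom{j}{2}}q^{(m-t)j}}{1-xq^j}$. I would now invoke the classical Lambert-series evaluation $\sum_{j\in\Z}\frac{(-1)^jq^{\binom{j}{2}}}{1-\xi q^j}=\frac{\xi(q)_\infty^3}{\Theta(\xi;q)}$ (verified, e.g., by comparing poles and residues at $\xi=q^n$, both sides being invariant under $\xi\mapsto q\xi$ after dividing by $\xi$), together with $\Theta(q^k\xi;q)=(-1)^kq^{-\binom{k}{2}}\xi^{-k}\Theta(\xi;q)$; these combine to give $\sum_{j\in\Z}\frac{(-1)^jq^{\binom{j}{2}}q^{kj}}{1-xq^j}=x^{1-k}\frac{(q)_\infty^3}{\Theta(x;q)}$ for $k\in\Z$. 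Taking $k=m-t$ and $k=-(t+m)$, the sum of the two contributions equals $\frac{(q)_\infty^3}{\Theta(x;q)}\bigl(q^{\frac18+\frac{t-m}2}x^{t-m+1}-q^{\frac18+\frac{t+m}2}x^{t+m+1}\bigr)=x^{t-m+1}q^{\frac18-\frac m2+\frac t2}\frac{(q)_\infty^3}{\Theta(x;q)}(1-x^{2m}q^m)$, and adding the three pieces gives the stated identity for $g_{t,m}(qx)$.

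\emph{Functional equation for $f_{t,m}$.} From $f_{t,m}(x)=-q^{-\frac t2-\frac m2+\frac78}\frac{\Theta(x;q)}{(q)_\infty^3}g_{t,m}(x)$ and $\Theta(qx;q)=-x^{-1}\Theta(x;q)$ one gets $f_{t,m}(qx)=x^{-1}q^{-\frac t2-\frac m2+\frac78}\frac{\Theta(x;q)}{(q)_\infty^3}g_{t,m}(qx)$; I would substitute the functional equation just obtained. The term $x^{2t}q^tg_{t,m}(x)$ becomes $-x^{2t-1}q^tf_{t,m}(x)$ after re-using the definition of $f_{t,m}$; in the term containing $(q)_\infty^3/\Theta(x;q)$ that factor cancels and the $q$-exponent collapses to $-m+1$, yielding $x^{t-m}q^{-m+1}(1-x^{2m}q^m)$; and the theta sum is merely multiplied by $x^{-1}q^{-\frac t2-\frac m2+\frac78}\Theta(x;q)/(q)_\infty^3$, which, absorbing $q^{-t/2}$ into the summand, produces the asserted coefficient of $\sum_{k=0}^{2t-1}x^kq^{\frac{k-t}2}\theta_{k-t,m}$. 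The one genuinely non-mechanical step is the evaluation of the two $\delta$-contributions as theta quotients via the Lambert identity above; the index shift, the finite geometric expansion, the quasi-periodicity of $\Theta$, and the tracking of $q$-powers are all routine.
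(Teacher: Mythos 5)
Your proposal is correct and is essentially the paper's own argument: the paper packages the same computation as a two-way evaluation of the auxiliary sum $x^{-2t}q^{-t}\sum \sg(r,s)(-1)^{\frac{r-s-1}{2}}q^{E(r,s)}\frac{1-x^{2t}q^{t(r+s+3)}}{1-xq^{(r+s+3)/2}}$, but the ingredients — the shift $(r,s)\mapsto(r-1,s-1)$ with $\sg(r-1,s-1)=\sg(r,s)-\delta(r)-\delta(s)$, the finite geometric expansion producing the $\theta_{k-t,m}$, the Lambert-series evaluation $\sum_{j}\frac{(-1)^jq^{\binom j2+kj}}{1-xq^j}=x^{1-k}\frac{(q)_\infty^3}{\Theta(x;q)}$, and the quasi-periodicity of $\Theta$ for passing to $f_{t,m}$ — are exactly those of the paper, and your bookkeeping of exponents matches the stated identities.
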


\begin{proof}[Proof of Lemma \ref{lem:lem2}]
We consider the sum
\[x^{-2t}q^{-t} \sum_{r\not\equiv s\smod{2}} \sg(r,s) (-1)^{\frac{r-s-1}2} q^{\frac18 r^2 + \frac{4t-1}4rs+\frac18 s^2 +\frac{t+m}2r+\frac{t-m}2 s}\,\frac{1-x^{2t}q^{t(r+s+3)}}{1-xq^{\frac{r+s+3}2}}\]
and rewrite it in two different ways.
First using
\[ \frac{1-x^{2t}q^{t(r+s+3)}}{1-xq^{\frac{r+s+3}2}}= \sum_{k=0}^{2t-1} x^k q^{\frac{k(r+s+3)}2}\]
we get that it equals
\[\begin{split}
x^{-2t}q^{-t} &\sum_{k=0}^{2t-1} x^k q^{\frac{3k}2} \sum_{r\not\equiv s\smod{2}} \sg(r,s)\,(-1)^{\frac{r-s-1}2} q^{\frac18 r^2 + \frac{4t-1}4rs+\frac18 s^2 +\frac{k+t+m}2r+\frac{k+t-m}2 s}\\
&= \sum_{k=0}^{2t-1} x^{k-2t} q^{\frac{3k}2-t} \theta_{k+t,m}=\sum_{k=0}^{2t-1} x^{k-2t} q^{\frac k2-t} \theta_{k-t,m}.
\end{split}\]
On the other hand, we can split it as the sum of
\[x^{-2t}q^{-t}\sum_{r\not\equiv s\smod{2}} \sg(r,s)\, \frac{(-1)^{\frac{r-s-1}2} q^{\frac18 r^2 + \frac{4t-1}4rs+\frac18 s^2 +\frac{t+m}2r+\frac{t-m}2 s}}{1-xq^{\frac{r+s+3}2}}=x^{-2t}q^{-t}g_{t,m}(qx)\]
and
\[ -\sum_{r\not\equiv s\smod{2}} \sg(r,s) \frac{(-1)^{\frac{r-s-1}2} q^{\frac18 r^2 + \frac{4t-1}4rs+\frac18 s^2 +\frac{3t+m}2r+\frac{3t-m}2 s+2t}}{1-xq^{\frac{r+s+3}2}}.\]
Replacing $(r,s)$ by $(r-1,s-1)$ in this latter sum and using $\sg(r-1,s-1)=\sg(r,s) -\delta(r)-\delta(s)$ we find that it equals
\[\begin{split}
-&\sum_{r\not\equiv s\smod{2}} \sg(r-1,s-1)\, \frac{(-1)^{\frac{r-s-1}2} q^{\frac18 r^2 + \frac{4t-1}4rs+\frac18 s^2 +\frac{t+m}2r+\frac{t-m}2 s}}{1-xq^{\frac{r+s+1}2}}\\
&= -g_{t,m}(x)+\sum_{s\equiv 1\smod{2}} \frac{(-1)^{\frac{s+1}2} q^{\frac18 s^2 +\frac{t-m}2 s}}{1-xq^{\frac{s+1}2}}+\sum_{r\equiv 1\smod{2}}  \frac{(-1)^{\frac{r-1}2} q^{\frac18 r^2 +\frac{t+m}2 r}}{1-xq^{\frac{r+1}2}}\\
&= -g_{t,m}(x)+q^{\frac18 + \frac m2-\frac t2} \sum_{k\in\Z} \frac{(-1)^k q^{\frac 12 k^2+(t-m-\frac12)k}}{1-xq^k}-q^{\frac18-\frac m2-\frac t2}\sum_{k\in\Z} \frac{(-1)^k q^{\frac 12 k^2+(t+m-\frac12)k}}{1-xq^k}.
\end{split}\]
Using
\[\sum_{k\in\Z} \frac{(-1)^k q^{\frac 12 k^2+(n+\frac12)k}}{1-xq^k}=\frac{(q)_\infty^3}{x^n \Theta(x;q)},\]
we then obtain
\[\sum_{k=0}^{2t-1} x^{k-2t} q^{\frac k2-t} \theta_{k-t,m}=x^{-2t}q^{-t}g_{t,m}(qx)-g_{t,m}(x) -x^{-t-m+1}q^{\frac18-\frac m2 -\frac t2} \frac{(q)_\infty^3}{\Theta(x;q)}(1-x^{2m}q^m),\]
from which the functional equation for $g_{t,m}$ follows by multiplying with $x^{2t}q^t$.
Now using the well-known elliptic transformation property
\begin{equation}
    \Theta(q^nx;q)=(-1)^{n}q^{-\binom{n}{2}}x^{-n}\Theta(x;q),
    \label{equation:j-elliptic}
\end{equation}
we directly get the functional equation for $f_{t,m}$.
\end{proof}

\begin{example}
We consider the case $t=1$.
From Lemma \ref{lem:theta} we get $\theta_{-1,m}=\theta_{0,m}=0$, so Lemma \ref{lem:lem2} gives
\[f_{1,m}(qx)=-xq f_{1,m}(x)+x^{-m+1}q^{-m+1}-x^{m+1}q.\]
The function
\[ f(x)=(-1)^{m+1}q^{\frac12m^2-\frac12m}\sum_{n=-m+1}^m (-1)^n q^{-\frac12n^2+\frac12n}x^n\]
satisfies the same recursion.
Since both $f_{1,m}$ and $f$ have no poles, this then gives
\[ f_{1,m}(x)=f(x)= (-1)^{m+1}q^{\frac12m^2-\frac12m}\sum_{n=-m+1}^m (-1)^n q^{-\frac12n^2+\frac12n}x^n.\]
\end{example}

Now we prove Theorem \ref{theo:main} by solving the functional equation for $f_{t,m}$.

\begin{proof}[Proof of Theorem \ref{theo:main}]
Since $f_{t,m}$ has no poles, we can write it as a Laurent series:
\begin{equation}\label{eq:laurent}
f_{t,m}(x) = \sum_{r\in\Z} (-1)^r q^{-\frac1{2(2t-1)} r^2 -\frac1{2(2t-1)} r} a_r\, x^{-r}
\end{equation}
for all $x\in\C\smallsetminus\{0\}$.
Note that the extra factor in the coefficients is just for convenience.
Plugging this series in the functional equation for $f_{t,m}$ and comparing coefficients on both sides, we get that the coefficients $a_r$ satisfy the recurrence relation
\begin{equation}\label{eq:recur}
a_r= a_{r+2t-1} +b_r+ \begin{cases} C & \text{if}\ r=-t+m,\\ -C & \text{if}\ r=-t-m,\\ 0&\text{otherwise,}\end{cases}
\end{equation}
where
\begin{equation}\label{eq:bandC}
\begin{split}
b_r&:=- q^{-\frac t2 -\frac m2+\frac78 +\frac t{2t-1}r^2 +\frac t{2t-1}r}\frac1{(q)_\infty^3} \sum_{k=0}^{2t-1} (-1)^k q^{\frac12k^2+rk} \theta_{k-t,m},\\
C&:= (-1)^{t+m} q^{-\frac{(3t-m-2)(t+m-1)}{2(2t-1)}}.
\end{split}
\end{equation}
Further, from \eqref{eq:laurent} we also get
\[\lim_{r\rightarrow \pm \infty} a_r=0,\]
which together with \eqref{eq:recur} yields
\begin{equation}\label{eq:sum}
\sum_{n\equiv \ell \smod{2t-1}} b_n= \begin{cases} -C & \text{if}\ \ell \equiv -t+m\smod{2t-1},\\ C & \text{if}\ \ell\equiv -t-m\smod{2t-1},\\ 0&\text{otherwise.}\end{cases}
\end{equation}
Now consider
\[ \widetilde a_r := \sum_{l\in\Z} \sg(r,l)\, b_{r+l(2t-1)}.\]
We claim that $\widetilde a_r$ satisfies the same recurrence relation as $a_r$:
\[ \begin{split}
\widetilde a_r- \widetilde a_{r+2t-1} &= \sum_{l\in\Z} \bigl( \sg(r,l)-\sg(r+2t-1,l-1)\bigr)\, b_{r+l(2t-1)}\\
&=\sum_{l\in\Z} \bigl( \delta(l) -\delta(r+1)-\delta(r+2)-\ldots - \delta(r+2t-1)\bigr)\, b_{r+l(2t-1)}\\
&= b_r- \bigl(\delta(r+1)+\delta(r+2)+\ldots + \delta(r+2t-1)\bigr) \cdot \sum_{n\equiv r \smod{2t-1}} b_n\\
&= b_r+\begin{cases} C & \text{if}\ r=-t+m,\\ -C & \text{if}\ r=-t-m,\\ 0&\text{otherwise,}\end{cases}
\end{split}\]
where in the last step we have used \eqref{eq:sum} and that $1\leq m<t$.
Further, we can easily see that $\lim_{r\rightarrow \infty} \widetilde a_r=0$.
Hence we conclude
\begin{equation}\label{eq:sol}
a_r =\widetilde a_r = \sum_{l\in\Z} \sg(r,l)\, b_{r+l(2t-1)}
\end{equation}
(the difference $a_r-\widetilde a_r$ is $2t-1$ periodic and goes to 0 for $r\rightarrow \infty$, so it is identically 0).
Now combining \eqref{eq:laurent}  with \eqref{eq:sol} and \eqref{eq:bandC} we get
\[\begin{split}
&f_{t,m}(x)= \sum_{r,l\in\Z} \sg(r,l)\,(-1)^r q^{-\frac1{2(2t-1)} r^2 -\frac1{2(2t-1)} r} x^{-r} b_{r+l(2t-1)}\\
&= -\frac{q^{-\frac t2-\frac m2+\frac78}}{(q)_\infty^3}\sum_{k=0}^{2t-1} (-1)^k q^{\frac12 k^2} \theta_{k-t,m} \sum_{r,l\in\Z} \sg(r,l)\, (-1)^r q^{\frac12 r^2+2trl+t(2t-1)l^2+(k+\frac12)r + (2tk-k+t)l} x^{-r}\\
&= -\frac{q^{-\frac34t-\frac m2+\frac78}}{(q)_\infty^3}\sum_{k=0}^{2t-1} (-1)^k q^{\frac1{4t} (k-t)^2} \theta_{k-t,m} \sum_{r,l\in\Z} \sg(r,l)\, (-1)^r q^{\frac12 r^2+(2tl+k)r+\frac{2t-1}{4t}(2tl+k)^2+\frac12 r + \frac12(2tl+k)} x^{-r}\\
&= -\frac{q^{-\frac34t-\frac m2+\frac78}}{(q)_\infty^3}\sum_{k=0}^{2t-1} (-1)^k q^{\frac1{4t} (k-t)^2} \theta_{k-t,m} \underset{l\equiv k\smod{2t}}{\sum_{r,l\in\Z}} \sg(r,l)\, (-1)^r q^{\frac12 r^2+lr+\frac{2t-1}{4t}l^2+\frac12 r + \frac12l} x^{-r}.
\end{split}\]
Since both
\[ (-1)^k q^{\frac1{4t} (k-t)^2} \theta_{k-t,m}\qquad \text{and} \qquad \underset{l\equiv k\smod{2t}}{\sum_{r,l\in\Z}} \sg(r,l)\, (-1)^r q^{\frac12 r^2+lr+\frac{2t-1}{4t}l^2+\frac12 r + \frac12l} x^{-r}\]
depend only on $k$ modulo $2t$, we can replace $\sum_{k=0}^{2t-1}$ by $\sum_{k\operatorname{mod} 2t}$ to get the desired result.
\end{proof}

Using equation \eqref{eq:sum} we can actually obtain a useful identity involving the theta functions $\theta_{k,m}^*$.

\begin{theorem}\label{theo:twee}
For $t\geq 2$, $1\leq m <t$ and $\ell\in\Z$ we have
\[ \sum_{k\operatorname{mod}2t} (-1)^k \theta_{k,m}^* \underset{r\equiv 2t\ell+(2t-1)k \smod{2t(2t-1)}}{\sum_{r\in\Z}} q^{\frac{1}{4t(2t-1)}r^2}=\begin{cases} (-1)^m q^{\frac18}(q)_\infty^3 & \text{if}\ \ell \equiv m\smod{2t-1},\\ (-1)^{m+1}q^{\frac18}(q)_\infty^3 & \text{if}\ \ell\equiv -m\smod{2t-1},\\ 0&\text{otherwise.}\end{cases}\]
\end{theorem}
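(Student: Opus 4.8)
The plan is to read Theorem \ref{theo:twee} off from equation \eqref{eq:sum}, which was obtained in the proof of Theorem \ref{theo:main} and already records $\sum_{n\equiv\ell\smod{2t-1}} b_n$ in closed form; the whole point is simply to evaluate the left-hand side of \eqref{eq:sum} explicitly. First I would substitute the definition \eqref{eq:bandC} of $b_n$ and interchange the finite sum over $k\in\{0,\dots,2t-1\}$ with the sum over $n\equiv\ell\smod{2t-1}$, so that everything reduces, for each fixed $k$, to the inner sum $\sum_{n\equiv\ell\smod{2t-1}} q^{\frac t{2t-1}n^2+(\frac t{2t-1}+k)n}$.

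Next I would complete the square. One verifies the identity
\[
\frac t{2t-1}n^2+\Bigl(\frac t{2t-1}+k\Bigr)n=\frac1{4t(2t-1)}\bigl(2tn+t+(2t-1)k\bigr)^2-\frac{(t+(2t-1)k)^2}{4t(2t-1)},
\]
so that after the substitution $r=2tn+t+(2t-1)k$ the inner sum becomes $q^{-\frac{(t+(2t-1)k)^2}{4t(2t-1)}}\sum_{r\equiv 2t\ell+t+(2t-1)k\smod{2t(2t-1)}} q^{\frac1{4t(2t-1)}r^2}$. I would then pass from $\theta_{k-t,m}$ to $\theta_{k-t,m}^*$ by means of $\theta_{k-t,m}=q^{\frac{m^2}{2(2t-1)}-\frac{(k-t)^2}{4t}}\theta_{k-t,m}^*$ (Definition \ref{def:theta}). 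The key bookkeeping observation is that the total power of $q$ left multiplying $\theta_{k-t,m}^*$, namely $\tfrac12 k^2-\frac{(t+(2t-1)k)^2}{4t(2t-1)}-\frac{(k-t)^2}{4t}$, is independent of $k$: both the $k^2$- and $k$-terms cancel, and it collapses to $-\frac t{4(2t-1)}-\frac t4$.

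At this stage each summand is $2t$-periodic in $k$: $(-1)^k$ is because $2t$ is even, $\theta_{k-t,m}^*$ is by Lemma \ref{lem:theta}(a), and the class $2t\ell+t+(2t-1)k$ is unchanged modulo $2t(2t-1)$ under $k\mapsto k+2t$. (Note that, term by term, the original $k$-sum in \eqref{eq:bandC} is not periodic; periodicity only appears after the $k$ and $n$ variables have been mixed by completing the square.) Hence I can rewrite $\sum_{k=0}^{2t-1}$ as $\sum_{k\operatorname{mod}2t}$ and shift $k\mapsto k+t$; carrying this out, and simultaneously relabelling the $\ell$ of \eqref{eq:sum} as $\ell-t$ — which converts its residue conditions $\ell\equiv -t\pm m\smod{2t-1}$ into $\ell\equiv\pm m\smod{2t-1}$, exactly as in the statement — the congruence defining the inner $r$-sum turns into $r\equiv 2t\ell+(2t-1)k\smod{2t(2t-1)}$, matching Theorem \ref{theo:twee}.

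What then remains is to pin down the overall constant. Gathering the prefactor $-q^{-t/2-m/2+7/8}/(q)_\infty^3$ from $b_n$, the $k$-independent $q$-power isolated above, the sign $(-1)^t$ produced by the shift $k\mapsto k+t$, and the value $\pm C$ on the right-hand side of \eqref{eq:sum}, the full claim reduces to the single scalar identity
\[
C=(-1)^{t}(-1)^{m}q^{\frac18}\,q^{-\frac{3t}4-\frac m2+\frac78-\frac t{4(2t-1)}+\frac{m^2}{2(2t-1)}},
\]
where $C=(-1)^{t+m}q^{-\frac{(3t-m-2)(t+m-1)}{2(2t-1)}}$ as in \eqref{eq:bandC}; equivalently, after clearing the denominator $4(2t-1)$, the polynomial identity $-2(3t-m-2)(t+m-1)+t-2m^2=(2t-1)(4-3t-2m)$ in $t$ and $m$. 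I expect this final reconciliation of the $q$-exponents, together with keeping the two different roles of $\ell$ (in \eqref{eq:sum} versus in Theorem \ref{theo:twee}) straight, to be the only genuine friction; everything else is a mechanical unfolding of \eqref{eq:sum}.
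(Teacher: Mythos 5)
Your proposal is correct and follows essentially the same route as the paper: substituting \eqref{eq:bandC} into \eqref{eq:sum}, completing the square to produce the $r$-sum modulo $2t(2t-1)$, passing to $\theta^*_{k-t,m}$, using $2t$-periodicity to shift $(k,\ell)\mapsto(k+t,\ell-t)$, and reconciling the constant with $C$ (your polynomial identity $-2(3t-m-2)(t+m-1)+t-2m^2=(2t-1)(4-3t-2m)$ checks out). The only difference is cosmetic — the paper keeps $q^{\frac1{4t}(k-t)^2}\theta_{k-t,m}$ until the end and leaves the final exponent bookkeeping implicit, which you carry out explicitly.
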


\begin{proof}[Proof of Theorem \ref{theo:twee}]
From \eqref{eq:bandC} we get
\[ \begin{split}
&\sum_{n\equiv \ell\smod{2t-1}} b_n = \sum_{r\in\Z} b_{(2t-1)r+\ell}\\
&\qquad = -\frac{q^{-\frac t2-\frac m2+\frac78}}{(q)_\infty^3} \sum_{r\in\Z} q^{\frac t{2t-1}((2t-1)r+\ell)^2 +\frac t{2t-1}((2t-1)r+\ell)} \sum_{k=0}^{2t-1} (-1)^k q^{\frac12k^2+((2t-1)r+\ell)k} \theta_{k-t,m}\\
&\qquad = -\frac{q^{-\frac34t-\frac m2+\frac78-\frac{t}{4(2t-1)}}}{(q)_\infty^3} \sum_{k=0}^{2t-1} (-1)^k q^{\frac1{4t}(k-t)^2} \theta_{k-t,m} \sum_{r\in\Z} q^{\frac{1}{4t(2t-1)}(2t(2t-1)r+2t\ell+(2t-1)k+t)^2}\\
&\qquad = -\frac{q^{-\frac34t-\frac m2+\frac78-\frac{t}{4(2t-1)}}}{(q)_\infty^3} \sum_{k\operatorname{mod}2t} (-1)^k q^{\frac1{4t}(k-t)^2} \theta_{k-t,m} \underset{r\equiv 2t\ell+(2t-1)k+t \smod{2t(2t-1)}}{\sum_{r\in\Z}} q^{\frac{1}{4t(2t-1)}r^2},
\end{split}\]
so \eqref{eq:sum} gives
\[\begin{split}
\sum_{k\operatorname{mod}2t} (-1)^k \theta_{k-t,m}^* &\underset{r\equiv 2t\ell+(2t-1)k+t \smod{2t(2t-1)}}{\sum_{r\in\Z}} q^{\frac{1}{4t(2t-1)}r^2}\\
&=\begin{cases} (-1)^{t+m} q^{\frac18}(q)_\infty^3 & \text{if}\ \ell \equiv -t+m\smod{2t-1},\\ (-1)^{t+m+1} q^{\frac18}(q)_\infty^3 & \text{if}\ \ell\equiv -t-m\smod{2t-1},\\ 0&\text{otherwise.}\end{cases}
\end{split}\]
Now replacing $(k,\ell)$ by $(k+t,\ell -t)$ and multiplying with $(-1)^t$ we get the desired result.
\end{proof}

\section{Proof of Theorem \ref{theo:general-hecke}}\label{section:proof-hecke}

First we assume that $x_1$, $x_2$, $y_1$, $y_2$ satisfy
\[ |q|^{D} < |x_1^cy_1^{-b}x_2|< 1\qquad \text{and}\qquad |q|^{D} < |x_1^{-b}y_1^a y_2|< 1.\]
We consider the indefinite theta series
\[ \sum_{r_1,s_1,r_2,s_2\in\Z} \sg(r_1,s_1)\, q^{\frac12a r_1^2+br_1s_1+\frac12cs_1^2+\frac12 cDr_2^2+\frac12a Ds_2^2}\, x_1^{r_1}y_1^{s_1} x_2^{r_2} y_2^{s_2},\]
which immediately splits as a product:
\[ \sum_{r_2\in\Z} q^{\frac12 cDr_2^2}\, x_2^{r_2} \sum_{s_2\in\Z} q^{\frac12 aDs_2^2}\, y_2^{s_2} \sum_{r_1,s_1\in\Z} \sg(r_1,s_1)\, q^{\frac12a r_1^2+br_1s_1+\frac12cs_1^2}\, x_1^{r_1}y_1^{s_1}.\]
Note that the last double sum is the Hecke-type double sum that we're actually interested in.
On the other hand, we can split $\sg(r_1,s_1)$ as
\[ \sg(r_1,s_1)= \sg(r_1,cr_2-r_1) +\sg(s_1,as_2-s_1) - \sg(cr_2-r_1,as_2-s_1)\]
and use this to split the series into the sum of three parts.
We consider each of these parts separately and start with the part coming from $\sg(r_1,cr_2-r_1)$:
\[ \begin{split}
& \sum_{r_1,s_1,r_2,s_2\in\Z} \sg(r_1,cr_2-r_1)\, q^{\frac12a r_1^2+br_1s_1+\frac12cs_1^2+\frac12 cDr_2^2+\frac12a Ds_2^2}\, x_1^{r_1}y_1^{s_1} x_2^{r_2} y_2^{s_2}\\
&\qquad= \sum_{s_2\in\Z} q^{\frac12 aDs_2^2}\, y_2^{s_2} \sum_{r_1,s_1,r_2\in\Z} \sg(r_1,cr_2-r_1)\, q^{\frac12a r_1^2+br_1s_1+\frac12cs_1^2+\frac12 cDr_2^2}\, x_1^{r_1}y_1^{s_1} x_2^{r_2}
\end{split}\]
In the triple sum we write $r_1$ as $ck+t$ with $k\in\Z$ and $0\leq t < c$, and replace $(s_1,r_2)$ by $(s_1-bk,r_2+k)$ to get
\[\begin{split}
\sum_{s_2\in\Z} q^{\frac12 aDs_2^2}\, y_2^{s_2}&\sum_{t=0}^{c-1} \sum_{s_1\in\Z} q^{\frac12a t^2+bts_1+\frac12cs_1^2}x_1^t y_1^{s_1}\\
&\cdot \sum_{k,r_2\in\Z} \sg(ck+t,cr_2-t)\, q^{\frac12 cDr_2^2} x_2^{r_2} \bigl(x_1^cy_1^{-b}x_2 q^{D(cr_2-t)}\bigr)^k.    
\end{split}\]
In the last double sum we replace $\sg(ck+t,cr_2-t)$ by $\sg(k,cr_2-t)$ and use
\begin{equation}\label{eq:geom}
\sum_{k\in\Z} \sg(k,n)\, z^k q^{nk} =\frac1{1-zq^n},\qquad |q| < |z| < 1
\end{equation}
with $q$ replaced by $q^{D}$, $n=cr_2-t$ and $z=x_1^cy_1^{-b}x_2$ to get
\[\sum_{s_2\in\Z} q^{\frac12 aDs_2^2}\, y_2^{s_2}\sum_{t=0}^{c-1} \sum_{s_1\in\Z} q^{\frac12a t^2+bts_1+\frac12cs_1^2}x_1^t y_1^{s_1}\sum_{r_2\in\Z}\frac{q^{\frac12 cDr_2^2} x_2^{r_2}}{1-x_1^cy_1^{-b}x_2 q^{D(cr_2-t)}}.\]
By symmetry we get that the part coming from $\sg(s_1,as_2-s_1)$ equals
\[ \begin{split}
    \sum_{r_1,s_1,r_2,s_2\in\Z} &\sg(s_1,as_2-s_1)\, q^{\frac12a r_1^2+br_1s_1+\frac12cs_1^2+\frac12 cDr_2^2+\frac12a Ds_2^2}\, x_1^{r_1}y_1^{s_1} x_2^{r_2} y_2^{s_2}\\
    &=\sum_{r_2\in\Z} q^{\frac12 cDr_2^2}\, x_2^{r_2}\sum_{t=0}^{a-1} \sum_{r_1\in\Z} q^{\frac12a r_1^2+br_1t+\frac12ct^2}x_1^{r_1} y_1^t\sum_{s_2\in\Z}\frac{q^{\frac12 aDs_2^2} y_2^{s_2}}{1-x_1^{-b}y_1^a y_2 q^{D(as_2-t)}}.
\end{split}\]
Finally, we consider the part coming from $\sg(cr_2-r_1,as_2-s_1)$:
here we substitute $(cr_2-r_1,as_2-s_1)$ for $(r_1,s_1)$ and rewrite the exponent of $q$ to get
\[ \begin{split}
    &\sum_{r_1,s_1,r_2,s_2\in\Z} \sg(cr_2-r_1,as_2-s_1)\, q^{\frac12a r_1^2+br_1s_1+\frac12cs_1^2+\frac12 cDr_2^2+\frac12a Ds_2^2}\, x_1^{r_1}y_1^{s_1} x_2^{r_2} y_2^{s_2}\\
    &=\sum_{r_1,s_1,r_2,s_2\in\Z} \sg(r_1,s_1)\, q^{\frac12c(br_2-s_1)^2+\frac{ac}b(br_2-s_1)(bs_2-r_1) +\frac12a(bs_2-r_1)^2 +\frac Dbr_1s_1}\, x_1^{cr_2-r_1}y_1^{as_2-s_1} x_2^{r_2} y_2^{s_2}.
\end{split}\]
We now write $r_1$ as $bk-u$ where $u$ is in a complete residue system modulo $b$ and $k\in\Z$.
Similarly we write $s_1$ as $bl-v$.
Further, we substitute $(r_2+l,s_2+k)$ for $(r_2,s_2)$ to get
\[\begin{split}
\sum_{u,v\smod b}q^{\frac12au^2+buv+\frac12cv^2} x_1^{u}y_1^{v} &\sum_{r_2,s_2\in\Z} q^{\frac12 b^2cr_2^2+abcr_2s_2+\frac12ab^2s_2^2 +c(au+bv)r_2+a(bu+cv)s_2}(x_1^cx_2)^{r_2} (y_1^ay_2)^{s_2}\\
& \cdot \sum_{k,l\in\Z} \sg(bk-u,bl-v)\, q^{bDkl} \bigl(x_1^{-b}y_1^a y_2 q^{-Dv}\bigr)^k \bigl( x_1^c y_1^{-b}x_2 q^{-Du}\bigr)^l.
\end{split}\]
We know that
\[ \sum_{k,l\in\Z} \sg(k+\gamma_1,l+\gamma_2)\, q^{kl} z_2^k z_1^l = \frac{(q;q)_{\infty}^3 \Theta(z_1z_2;q)}{\Theta(z_1;q)\Theta(z_2;q)}\]
holds with the conditions $|q|^{\gamma_i+1}< |z_i| < |q|^{\gamma_i}$, where $\gamma_1,\gamma_2\in\Z$.
We replace $q$ by $q^{bD}$ and set $z_1=x_1^cy_1^{-b}x_2q^{-Du}$, $z_2=x_1^{-b}y_1^a y_2 q^{-Dv}$.
Then the conditions are indeed satisfied with $\gamma_1=[-u/b]$, $\gamma_2=[-v/b]$.
Hence we get
\begin{align*} \sum_{k,l\in\Z} \sg(k+\gamma_1,l+\gamma_2)\, q^{bDkl}& \bigl(x_1^{-b}y_1^a y_2 q^{-Dv}\bigr)^k \bigl( x_1^c y_1^{-b}x_2 q^{-Du}\bigr)^l\\
& = \frac{(q^{bD};q^{bD})_{\infty}^3\, \Theta(x_1^{c-b}y_1^{a-b} x_2y_2 q^{-D(u+v)};q^{bD})}{\Theta(x_1^cy_1^{-b}x_2q^{-Du};q^{bD})\Theta(x_1^{-b}y_1^a y_2 q^{-Dv};q^{bD})}.
\end{align*}
Since
\[ \sg(bk-u,bl-v)= \sg(k+\gamma_1,l+\gamma_2)\]
we thus obtain that the part coming from $\sg(cr_2-r_1,as_2-s_1)$ equals
\[\begin{split}
\sum_{u,v\smod b} q^{\frac12au^2+buv+\frac12cv^2} x_1^{u}y_1^{v} &\sum_{r_2,s_2\in\Z} q^{\frac12 b^2cr_2^2+abcr_2s_2+\frac12ab^2s_2^2 +c(au+bv)r_2+a(bu+cv)s_2}(x_1^cx_2)^{r_2} (y_1^ay_2)^{s_2}\\
& \cdot \frac{(q^{bD};q^{bD})_{\infty}^3\, \Theta(x_1^{c-b}y_1^{a-b} x_2y_2 q^{-D(u+v)};q^{bD})}{\Theta(x_1^cy_1^{-b}x_2q^{-Du};q^{bD})\Theta(x_1^{-b}y_1^a y_2 q^{-Dv};q^{bD})}.
\end{split}\]
Now combining the different parts we obtain
\[\begin{split}
 \sum_{r_2\in\Z} q^{\frac12 cDr_2^2}\, & x_2^{r_2} \sum_{s_2\in\Z} q^{\frac12 aDs_2^2}\, y_2^{s_2} \sum_{r_1,s_1\in\Z} \sg(r_1,s_1)\, q^{\frac12a r_1^2+br_1s_1+\frac12cs_1^2}\, x_1^{r_1}y_1^{s_1}\\
 &=\sum_{s_2\in\Z} q^{\frac12 aDs_2^2}\, y_2^{s_2}\sum_{t=0}^{c-1} \sum_{s_1\in\Z} q^{\frac12a t^2+bts_1+\frac12cs_1^2}x_1^t y_1^{s_1}\sum_{r_2\in\Z}\frac{q^{\frac12 cDr_2^2} x_2^{r_2}}{1-x_1^cy_1^{-b}x_2 q^{D(cr_2-t)}}\\
 &\quad +\sum_{r_2\in\Z} q^{\frac12 cDr_2^2}\, x_2^{r_2}\sum_{t=0}^{a-1} \sum_{r_1\in\Z} q^{\frac12a r_1^2+br_1t+\frac12ct^2}x_1^{r_1} y_1^t\sum_{s_2\in\Z}\frac{q^{\frac12 aDs_2^2} y_2^{s_2}}{1-x_1^{-b}y_1^a y_2 q^{D(as_2-t)}}\\
 &\qquad -\sum_{u,v\smod b} q^{\frac12au^2+buv+\frac12cv^2} x_1^{u}y_1^{v}\\
 &\qquad\qquad \cdot \sum_{r_2,s_2\in\Z} q^{\frac12 b^2cr_2^2+abcr_2s_2+\frac12ab^2s_2^2 +c(au+bv)r_2+a(bu+cv)s_2}(x_1^cx_2)^{r_2} (y_1^ay_2)^{s_2}\\
&\qquad\qquad \cdot \frac{(q^{bD};q^{bD})_{\infty}^3\, \Theta(x_1^{c-b}y_1^{a-b} x_2y_2 q^{-D(u+v)};q^{bD})}{\Theta(x_1^cy_1^{-b}x_2q^{-Du};q^{bD})\Theta(x_1^{-b}y_1^a y_2 q^{-Dv};q^{bD})}
 \end{split}\]
for $|q|^{D} < |x_1^cy_1^{-b}x_2|< 1$ and $|q|^{D} < |x_1^{-b}y_1^a y_2|< 1$.
We fix $x_1$ and $y_1$ and consider both sides of this equation as a function of $x_2$ and $y_2$.
The left hand side is holomorphic in both variables and the right hand side meromorphic, with possible poles when $x_1^cy_1^{-b}x_2=q^{Dm}$ or $x_1^{-b}y_1^a y_2=q^{Dn}$ for some $m,n\in\Z$.
Using the identity theorem we thus get that the equation holds for all $x_2$ and $y_2$ outside of poles.
Dividing by
\[ \sum_{r_2\in\Z} q^{\frac12 cDr_2^2}\, x_2^{r_2} \sum_{s_2\in\Z} q^{\frac12 aDs_2^2}\, y_2^{s_2}\]
and rewriting in terms of $\Theta$ and $m$ we obtain the desired result.

\section{A corollary to Theorem \ref{theo:general-hecke}}\label{section:proof-fabc}

This leads us to the most general identity for Hecke-type double-sums.  To state our results, we define the following expression involving Appell functions:
\begin{definition} Let $a,b,$ and $c$ be positive integers with  $D:=b^2-ac>0$.  Then
\begin{align*} 
g_{a,b,c}(x,y,z_1,z_0;q)
&:=\sum_{t=0}^{a-1}(-y)^tq^{c\binom{t}{2}}\Theta(q^{bt}x;q^a)m\Big (-q^{a\binom{b+1}{2}-c\binom{a+1}{2}-tD}\frac{(-y)^a}{(-x)^b},z_0;q^{aD}\Big ) \\
&\ \ \ \ \ +\sum_{t=0}^{c-1}(-x)^tq^{a\binom{t}{2}}\Theta(q^{bt}y;q^c)m\Big (-q^{c\binom{b+1}{2}-a\binom{c+1}{2}-tD}\frac{(-x)^c}{(-y)^b},z_1;q^{cD}\Big ).\notag
\end{align*}
\end{definition}
\begin{theorem}\label{theo:general-fabc}Let $a,b,$ and $c$ be positive integers with  $D:=b^2-ac>0$. For generic $x$ and $y$, we have
\begin{align*}
& f_{a,b,c}(x,y;q)=g_{a,b,c}(x,y,-1,-1;q)+\frac{1}{\Theta(-1;q^{aD})\Theta(-1;q^{cD})}\cdot \theta_{a,b,c}(x,y;q),
\end{align*}
where
\begin{align*}
&\theta_{a,b,c}(x,y;q):=
\sum_{d^*=0}^{b-1}\sum_{e^*=0}^{b-1}q^{a\binom{d-c/2}{2}+b( d-c/2 ) (e+a/2  )+c\binom{e+a/2}{2}}(-x)^{d-c/2}(-y)^{e+a/2}\\
&\cdot\sum_{f=0}^{b-1}q^{ab^2\binom{f}{2}+\big (a(bd+b^2+ce)-ac(b+1)/2 \big )f} (-y)^{af}
\cdot \Theta(-q^{c\big ( ad+be+a(b-1)/2+abf \big )}(-x)^{c};q^{cb^2})\\
&\cdot \Theta(-q^{a\big ( (d+b(b+1)/2+bf)(b^2-ac) +c(a-b)/2\big )}(-x)^{-ac}(-y)^{ab};q^{ab^2D})\\
&\cdot \frac{(q^{bD};q^{bD})_{\infty}^3\Theta(q^{ D(d+e)+ac-b(a+c)/2}(-x)^{b-c}(-y)^{b-a};q^{bD})}
{\Theta(q^{De+a(c-b)/2}(-x)^b(-y)^{-a};q^{bD})\Theta(q^{Dd+c(a-b)/2}(-y)^b(-x)^{-c};q^{bD})}.
\end{align*}
Here $d:=d^*+\{c/2 \}$ and $e:=e^*+\{ a/2\}$, with  $0\le \{\alpha \}<1$ denoting fractional part of $\alpha$.
\end{theorem}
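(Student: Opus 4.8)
The plan is to derive Theorem~\ref{theo:general-fabc} as a specialization of Theorem~\ref{theo:general-hecke}. Writing $a\binom{r}{2}=\tfrac12ar^2-\tfrac12ar$ and $c\binom{s}{2}=\tfrac12cs^2-\tfrac12cs$, one checks that under the substitution $x_1=-xq^{-a/2}$, $y_1=-yq^{-c/2}$ the left-hand side of Theorem~\ref{theo:general-hecke} becomes $f_{a,b,c}(x,y;q)$, because
\[ x_1^{r_1}y_1^{s_1}q^{\frac12ar_1^2+br_1s_1+\frac12cs_1^2}=(-1)^{r_1+s_1}x^{r_1}y^{s_1}q^{a\binom{r_1}{2}+br_1s_1+c\binom{s_1}{2}}\]
and $\sg(r_1,s_1)$ is unchanged. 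The parameters $x_2,y_2$ still free in Theorem~\ref{theo:general-hecke} will then be set to $x_2=q^{-cD/2}$, $y_2=q^{-aD/2}$, so that $-q^{\frac12cD}x_2=-1$ and $-q^{\frac12aD}y_2=-1$; this is precisely the choice that produces the second slots $z_1=z_0=-1$ appearing in $g_{a,b,c}$. Since both sides of Theorem~\ref{theo:general-hecke} are meromorphic in $x$ and $y$, the resulting identity will hold for generic $x,y$ despite the specialization, by the identity theorem, exactly as at the close of the proof of Theorem~\ref{theo:general-hecke}.

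With this substitution in place, the first two sums on the right-hand side of Theorem~\ref{theo:general-hecke} will collapse onto the two sums defining $g_{a,b,c}(x,y,-1,-1;q)$. The routine simplifications needed here are $-x_1q^{bt+a/2}=xq^{bt}$, $-y_1q^{bt+c/2}=yq^{bt}$, $x_1^tq^{\frac12at^2}=(-x)^tq^{a\binom{t}{2}}$, $y_1^tq^{\frac12ct^2}=(-y)^tq^{c\binom{t}{2}}$, together with the elementary identities
\[ a\binom{b+1}{2}-c\binom{a+1}{2}=\tfrac12(aD+ab-ac),\qquad c\binom{b+1}{2}-a\binom{c+1}{2}=\tfrac12(cD+bc-ac),\]
which reconcile the $q$-powers inside the Appell functions and the $\Theta$-factors of the two formulas.

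The substantive work is the third term of Theorem~\ref{theo:general-hecke}. Its prefactor $\bigl(\Theta(-x_2q^{\frac12cD};q^{cD})\Theta(-y_2q^{\frac12aD};q^{aD})\bigr)^{-1}$ collapses to $\bigl(\Theta(-1;q^{cD})\Theta(-1;q^{aD})\bigr)^{-1}$, and the theta quotient $\frac{(q^{bD};q^{bD})_\infty^3\,\Theta(\,\cdot\,)}{\Theta(\,\cdot\,)\Theta(\,\cdot\,)}$ maps onto the corresponding quotient in $\theta_{a,b,c}$ by direct substitution. In the finite sum $\sum_{u,v\bmod b}$ I would choose the complete residue systems $u=d-c/2$, $v=e+a/2$ with $d=d^*+\{c/2\}$, $e=e^*+\{a/2\}$ and $0\le d^*,e^*<b$; this is exactly what is needed to absorb the half-integer powers of $q$ carried by $x_1=-xq^{-a/2}$ and $y_1=-yq^{-c/2}$ into the clean expressions $a\binom{d-c/2}{2}+b(d-c/2)(e+a/2)+c\binom{e+a/2}{2}$ and $(-x)^{d-c/2}(-y)^{e+a/2}$, and it accounts for the fractional parts in the statement. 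Finally one must rewrite the positive-definite binary theta series
\[ \sum_{r_2,s_2\in\Z}q^{\frac12b^2cr_2^2+abcr_2s_2+\frac12ab^2s_2^2+c(au+bv)r_2+a(bu+cv)s_2}(x_1^cx_2)^{r_2}(y_1^ay_2)^{s_2}\]
as a finite combination of products of the two single theta functions $\Theta(\,\cdot\,;q^{cb^2})$ and $\Theta(\,\cdot\,;q^{ab^2D})$ appearing in $\theta_{a,b,c}$. For this I would write $s_2=bs_2'+f$ with $0\le f<b$, which splits $\Z^2$ over the $b$ cosets of the index-$b$ sublattice $\{s_2\equiv 0\smod b\}$, and on each coset complete the square via $r_2\mapsto r_2+as_2'$; this diagonalizes the quadratic form as $\tfrac12cb^2(\,\cdot\,)^2+\tfrac12ab^2D(\,\cdot\,)^2$. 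The cross terms and the $f$-dependent linear shifts are then pushed into the arguments of the two theta functions using the elliptic transformation \eqref{equation:j-elliptic}, while the contribution of the coset representatives produces the factor $q^{ab^2\binom{f}{2}+(\cdots)f}(-y)^{af}$. Matching the emerging $q$-exponents term by term against the definition of $\theta_{a,b,c}$, and tracking the overall sign that converts the $-$ in front of the third term of Theorem~\ref{theo:general-hecke} into the $+$ in the statement, will complete the argument.

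The main obstacle is purely the bookkeeping in this last step. There are two successive completions of the square, three layers of residue reduction (for $s_2$ modulo $b$, and for $u$ and $v$ modulo $b$) plus the auxiliary index shifts needed to normalize the theta arguments, and case distinctions by the parities of $a$ and $c$ hidden inside $\{a/2\}$ and $\{c/2\}$; each of these contributes powers of $q$ and signs that must be tracked with care so that the final expression coincides with the rather intricate closed form for $\theta_{a,b,c}$. I expect no conceptual difficulty beyond this, since the diagonalization of the binary form and its reduction to single theta functions are forced by the shape of the right-hand side.
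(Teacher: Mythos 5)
Your proposal is correct and follows essentially the same route as the paper's proof: specialize Theorem \ref{theo:general-hecke} at $x_1=-xq^{-a/2}$, $y_1=-yq^{-c/2}$, $x_2=q^{-cD/2}$, $y_2=q^{-aD/2}$, identify the first two terms with $g_{a,b,c}(x,y,-1,-1;q)$ and the dividing theta product with $\Theta(-1;q^{aD})\Theta(-1;q^{cD})$, and then diagonalize the remaining binary theta series by the coset split $s_2=bs_2'+f$, $0\le f<b$, together with the unimodular shift in $r_2$ (the paper's substitution $(r_2,s_2)\to(-ar+s,br+f)$), followed by the re-indexing $u,v\to d,e$ with the fractional parts $\{c/2\},\{a/2\}$. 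The remaining work you defer is exactly the bookkeeping the paper also carries out (including the final shift $f\to f+1$), so the plan is sound.
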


\begin{proof}[Proof of Theorem \ref{theo:general-fabc}]
We recall Theorem \ref{theo:general-hecke}.  Setting $x_1=-xq^{-\frac12a}$ and $y_1=-yq^{-\frac12c}$ the left hand side of this equation equals $f_{a,b,c}(x,y;q)$.
The first two terms on the right hand side exactly give
\[ g_{a,b,c}(x,y,-x_2q^{\frac12cD},-y_2q^{\frac12aD};q),\]
so we also have to set
\[ x_2 = q^{-\frac12cD}\qquad\text{and}\qquad y_2= q^{-\frac12aD}.\]
With these values
\[ \sum_{r_2\in\Z} q^{\frac12 cDr_2^2}\, x_2^{r_2} \sum_{s_2\in\Z} q^{\frac12 aDs_2^2}\, y_2^{s_2}\]
matches up with
\[ \Theta(-1;q^{aD})\Theta(-1;q^{cD}).\]
What remains to be checked is that
\begin{align}
\theta_{a,b,c}(x,y;q)=-&\sum_{u,v \pmod {b}} q^{\frac{1}{2}au^2+buv+\frac{1}{2}cv^2}x_{1}^{u}y_{1}^{v}\label{equation:theta-abc-v1}\\
&\cdot \sum_{r_2,s_2 \in \mathbb{Z}}q^{\frac{1}{2}b^2cr_2^2+abcr_2s_2+\frac{1}{2}ab^2s_2^2
+c(au+bv)r_2+a(bu+cv)s_2}(x_1^cx_2)^{r_2}(y_{1}^ay_2)^{s_2}\notag\\
& \cdot \frac{(q^{bD};q^{bD})_{\infty}^3\Theta(x_1^{c-b}y_{1}^{a-b}x_2y_2q^{-D(u+v)};q^{bD})}
{\Theta(x_{1}^cy_1^{-b}x_2q^{-Du};q^{bD})\Theta(x_{1}^{-b}y_1^{a}y_2q^{-Dv};q^{bD})}.
\notag
\end{align}
We make the substitutions $x_1=-xq^{-\frac{1}{2}a}$, $y_1=-yq^{-\frac{1}{2}c}$, $x_2=q^{-\frac{1}{2}cD}$, $y_2=q^{-\frac{1}{2}aD}$ and use (\ref{equation:j-elliptic}).  Omitting the lead minus sign, the right hand side of (\ref{equation:theta-abc-v1}) becomes
\begin{align}
&\sum_{u,v \pmod {b}} (-x)^{u}(-y)^{v}q^{a\binom{u}{2}+buv+c\binom{v}{2}}
\label{equation:theta-abc-v2}\\
&\cdot \sum_{r_2,s_2 \in \mathbb{Z}}
(-x)^{cr_2}(-y)^{as_2}q^{\frac{1}{2}b^2cr_2^2+abcr_2s_2+\frac{1}{2}ab^2s_2^2
+c(au+bv)r_2+a(bu+cv)s_2-\frac{1}{2}ac(r_2+s_2)-\frac{1}{2}(b^2-ac)(cr_2+as_2)}\notag\\
& \cdot \frac{(q^{bD};q^{bD})_{\infty}^3\Theta(q^{D(u+v+\frac{1}{2}a+\frac{1}{2}c)+ac-\frac{1}{2}b(a+c)}
(-x)^{b-c}(-y)^{b-a};q^{bD})}
{\Theta(q^{D(u+\frac{1}{2}c)+\frac{1}{2}c(a-b)}(-x)^{-c}(-y)^{b};q^{bD})\Theta(q^{D(v+\frac{1}{2}a)+\frac{1}{2}a(c-b)}(-x)^{b}(-y)^{-a};q^{bD})}.\notag
\end{align}
Let us focus on the double-sum over $r_2,s_2$ in (\ref{equation:theta-abc-v2}).  We replace $(r_2,s_2)$ with $(-ar+s, br+f)$ where $0\le f\le b-1$ to obtain
 {\allowdisplaybreaks\begin{align*}
\sum_{r_2,s_2 \in \mathbb{Z}}
&(-1)^{cr_2+as_2}q^{\frac{1}{2}b^2cr_2^2+abcr_2s_2+\frac{1}{2}ab^2s_2^2
+c(au+bv)r_2+a(bu+cv)s_2-\frac{1}{2}ac(r_2+s_2)-\frac{1}{2}(b^2-ac)(cr_2+as_2)}x^{cr_2}y^{as_2}\\
&=\sum_{f=0}^{b-1}(-y)^{af}q^{ab^2\binom{f}{2}+af\big (bu+cv+\frac{1}{2}c(a-1)\big )}\\
& \ \ \ \ \ \cdot \sum_{r\in\mathbb{Z}} \big ( (-x)^{-ac}(-y)^{ab}\big )^{r}
q^{ra\big ( (b^2-ac)(\binom{b}{2}+u+\frac{1}{2}c+bf)+\frac{1}{2}c(a-b)\big )}q^{ab^2(b^2-ac)\binom{r}{2}}\\
& \ \ \ \ \ \cdot \sum_{s\in\mathbb{Z}} (-x)^{cs}q^{sc\big ( \frac{1}{2}a(c-1)+abf+au+bv\big )}q^{b^2c\binom{s}{2}}\\
&=\sum_{f=0}^{b-1}(-y)^{af}q^{ab^2\binom{f}{2}+af\big (bu+cv+\frac{1}{2}c(a-1)\big )}\cdot  \Theta(-q^{c\big ( \frac{1}{2}a(c-1)+abf+au+bv\big )}(-x)^{c};q^{b^2c})\\
& \ \ \ \ \ \cdot \Theta(- q^{a\big ( D(\binom{b}{2}+u+\frac{1}{2}c+bf)+\frac{1}{2}c(a-b)\big )}
(-x)^{-ac}(-y)^{ab} ;q^{ab^2D}).
 \end{align*}}%
Collecting terms, (\ref{equation:theta-abc-v2}) becomes
\begin{align}
&\sum_{u,v \pmod {b}} (-x)^{u}(-y)^{v}q^{a\binom{u}{2}+buv+c\binom{v}{2}}
\label{equation:theta-abc-v3}\\
&\ \ \ \ \ \cdot \sum_{f=0}^{b-1}(-y)^{af}q^{ab^2\binom{f}{2}+af\big (bu+cv+\frac{1}{2}c(a-1)\big )}\cdot  \Theta(-q^{c\big ( \frac{1}{2}a(c-1)+abf+au+bv\big )}(-x)^{c};q^{b^2c})\notag\\
& \ \ \ \ \ \cdot \Theta(- q^{a\big ( D(\binom{b}{2}+u+\frac{1}{2}c+bf)+\frac{1}{2}c(a-b)\big )}
(-x)^{-ac}(-y)^{ab} ;q^{ab^2D})\notag\\
&\ \ \ \ \  \cdot \frac{(q^{bD};q^{bD})_{\infty}^3\Theta(q^{D(u+v+\frac{1}{2}a+\frac{1}{2}c)+ac-\frac{1}{2}b(a+c)}
(-x)^{b-c}(-y)^{b-a};q^{bD})}
{\Theta(q^{D(u+\frac{1}{2}c)+\frac{1}{2}c(a-b)}(-x)^{-c}(-y)^{b};q^{bD})\Theta(q^{D(v+\frac{1}{2}a)+\frac{1}{2}a(c-b)}(-x)^{b}(-y)^{-a};q^{bD})}.\notag
\end{align}

For the remainder of the proof, we regroups terms according to $(u+c/2)$ and $(v+a/2)$.  The two outermost sums are over $u$ and $v$ modulo $b$, so we can shift them by integers.  If $a$ or $c$ is not divisible by two, then there is a remaining $1/2$, so this is where the fractional parts come in.  Thinking modulo $b$, we let $d^*$ replace $u+\lfloor c/2\rfloor$ and let $e^*$ replace $v+\lfloor a/2 \rfloor$, where $\lfloor .. \rfloor$ is the greatest integer function.  Then $d:=d^* +\{c/2\}$, $e:=e^*+\{a/2\}$ where $0\le \{\alpha\}<1$ denotes the fractional part of $\alpha$.  Once everything is rewritten, one now has Theorem \ref{theo:general-fabc} but where we always have an $f-1$ instead of an $f$.  The sum over $f$ is modulo $b$ as well, so we replace $f$ with $f+1$ to have Theorem \ref{theo:general-fabc}.  On the other hand, one could just as easily stop with the above expression (\ref{equation:theta-abc-v3}).
\end{proof}

\section{Applications of the results}\label{section:examples}

Our first example needs no additional notation or results.

\begin{example}
We consider the case $t=2$ and $m=1$.
From Lemma \ref{lem:theta} we immediately get $\theta_{-2,1}=\theta_{0,1}=0$ and $\theta_{-1,1} =-\theta_{1,1}$, so Theorem \ref{theo:main} gives
\[ \begin{split}
f_{2,1}(x) &= - \frac{q^{-1} \theta_{1,1}}{(q)_\infty^3}\Bigl( \underset{l\equiv1 \smod{4}}{\sum_{r,l\in\Z}} - \underset{l\equiv3 \smod{4}}{\sum_{r,l\in\Z}} \Bigr) \sg(r,l)\, (-1)^r q^{\frac12 r^2+lr+\frac38l^2+\frac12 r + \frac12l} x^{-r}\\
&=- \frac{q^{-\frac18} \theta_{1,1}}{(q)_\infty^3}\sum_{r,l\in\Z} \sg(r,l)\, (-1)^{r+l} q^{\frac12r^2+2rl+\frac32 l^2+\frac32r+\frac52 l} x^{-r}.
\end{split}\]
Further, Theorem \ref{theo:twee} with $\ell=1$ gives
\[ \theta_{1,1}^*\, \Bigl(-\underset{r\equiv 7\smod{12}}{\sum_{r\in\Z}} + \underset{r\equiv 1\smod{12}}{\sum_{r\in\Z}}\Bigr)\, q^{\frac1{24}r^2}=-q^{\frac18}(q)_\infty^3,\]
so we find
\[ \theta_{1,1}^* = -q^{\frac1{12}} (q)_\infty^2 =-\eta^2\qquad \text{and}\qquad \theta_{1,1}= -q^{\frac18} (q)_\infty^2.\]
Thus we obtain
\begin{equation} f_{2,1}(x) = \frac1{(q)_\infty}\sum_{r,l\in\Z} \sg(r,l)\, (-1)^{r+l} q^{\frac12r^2+2rl+\frac32 l^2+\frac32r+\frac52 l} x^{-r},
\label{equation:f123-example}
\end{equation}
as in equation (4.1) of \cite{HL}.
\end{example}

Identity (\ref{equation:f123-example}) can thus be written
\begin{equation} f_{2,1}(x) = \frac1{(q)_\infty}f_{1,2,3}(x^{-1}q^2,q^4;q).\label{equation:f123-final}
\end{equation}
Unfortunately, \cite[Theorems 1.3, 1.4]{HM} do not evaluate sums of the form (\ref{equation:f123-final}).  However, from \cite[(4.7), (4.9)]{M1} we have the slightly rewritten
{\allowdisplaybreaks \begin{align}
f_{2,1}(x;q)&=(1-x )U_{1}^{(1)}(-x;q)\label{equation:f123-mxzq}\\
&=q^{-2}x^{-2}m(q^{-1}x^{-3},q^2x^2;q^3)-q^{-1}m(qx^{-3},q^2x^2;q^3)
-\frac{x}{q}\frac{\Theta(x;q)}{(q)_{\infty}}m(x^{2},x^{-1};q).\notag
\end{align}}%
We derive (\ref{equation:f123-mxzq}) in Section \ref{section:derive-f123}.

Let us go back to Theorem \ref{theo:main}.  We state a lemma that converts $\vartheta_{p,m}$ into double-sums, but we leave the proof until the end of the section.

\begin{lemma} \label{lem:thetaToHecke}We have
\begin{equation*}
\vartheta_{p,m}
=-q^{\frac{1}{8}+\frac{p-m}{2}}(f_{1,4t-1,1}(q^{p+m+2t},q^{p-m+1};q)
-q^mf_{1,4t-1,1}(q^{p+m+1},q^{p-m+2t};q)).
\end{equation*}
\end{lemma}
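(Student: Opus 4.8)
The strategy is to split the double sum defining $\theta_{p,m}$ according to the parity of $r$ (equivalently of $s$): the condition $r\not\equiv s\smod{2}$ leaves exactly two possibilities, namely $r$ even with $s$ odd, or $r$ odd with $s$ even. In the first case I would substitute $r=2a$, $s=2b+1$, and in the second $r=2a+1$, $s=2b$; in both cases $(a,b)$ ranges over all of $\Z^2$, and the two substitutions together form a bijective reindexing of the original index set, so the desired identity holds term by term (all sums in question converge, since $D=(4t-1)^2-1>0$).

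Next I would track the three ingredients of each summand under these substitutions. For the sign, $r-s-1$ equals $2(a-b-1)$ in the first case and $2(a-b)$ in the second, so $(-1)^{\frac{r-s-1}2}$ becomes $-(-1)^{a+b}$ and $(-1)^{a+b}$ respectively; this accounts for the relative minus sign between the two terms in the statement. For the weight, since $\sg(2a)=\sg(a)$ and $\sg(2b\pm1)=\sg(b)$ for all integers $a,b$, we have $\sg(r,s)=\sg(a,b)$ in both cases. For the exponent of $q$, expanding $\tfrac18 r^2+\tfrac{4t-1}4 rs+\tfrac18 s^2+\tfrac{p+m}2 r+\tfrac{p-m}2 s$ gives, in the first case,
\[\binom{a}{2}+(4t-1)ab+\binom{b}{2}+(p+m+2t)a+(p-m+1)b+\tfrac18+\tfrac{p-m}2,\]
and in the second case
\[\binom{a}{2}+(4t-1)ab+\binom{b}{2}+(p+m+1)a+(p-m+2t)b+\tfrac18+\tfrac{p+m}2.\]
Thus each case reproduces exactly the quadratic part $\binom{a}{2}+(4t-1)ab+\binom{b}{2}$ of $f_{1,4t-1,1}$, while the linear terms turn into the prescribed powers of $q$ in the first two arguments.

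Assembling the pieces, the $r$-even part equals $-q^{\frac18+\frac{p-m}2}f_{1,4t-1,1}(q^{p+m+2t},q^{p-m+1};q)$ and the $r$-odd part equals $q^{\frac18+\frac{p+m}2}f_{1,4t-1,1}(q^{p+m+1},q^{p-m+2t};q)$; adding these, factoring out $-q^{\frac18+\frac{p-m}2}$, and using $\tfrac{p+m}2-\tfrac{p-m}2=m$ gives the claimed identity. There is no analytic subtlety here, so the only real work — and the step most prone to error — is the exponent expansion displayed above, which is elementary but demands care with the fractional constants $\tfrac18$ and $\tfrac{p\pm m}2$.
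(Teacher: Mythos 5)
Your proposal is correct and follows essentially the same route as the paper: split the sum over $r\not\equiv s\smod 2$ into the cases $(r,s)=(2a,2b+1)$ and $(r,s)=(2a+1,2b)$, check that the sign becomes $\mp(-1)^{a+b}$ and the weight $\sg(r,s)$ becomes $\sg(a,b)$, and expand the exponent to recognize the two $f_{1,4t-1,1}$ sums with the stated prefactors. The only nominal slip is the unused claim ``$\sg(2b-1)=\sg(b)$,'' which fails at $b=0$; since your substitutions only ever need $\sg(2b+1)=\sg(b)$ and $\sg(2a)=\sg(a)$, this does not affect the argument.
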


As was already pointed out, up to multiplication by $q$ to an appropriate exponent, these sums are modular \cite{Zw} and can be evaluated specifically in terms of theta functions \cite[Theorem 1.3]{HM}.  Rewriting Theorem \ref{theo:main} in terms of double-sums, we have
\begin{corollary}\label{cor:main} We have
\begin{align*}
f_{t,m}(x)&=\frac{q^{-m+1-t}}{J_{1}^3}
\sum_{k=0}^{2t-1}(-1)^{k}q^{\binom{k+1}{2}}\\
&\ \ \ \ \ \cdot \big (f_{1,4t-1,1}(q^{k+m+t},q^{k-t-m+1};q)
-q^mf_{1,4t-1,1}(q^{k-t+m+1},q^{k-m+t};q)\big )\\
&\ \ \ \ \ \cdot f_{1,2t,2t(2t-1)}(x^{-1}q^{1+k},-q^{(2t-1)(k+t)+t};q).
\end{align*}
\end{corollary}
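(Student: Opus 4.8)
The plan is to start from the expression for $f_{t,m}(x)$ given in Theorem \ref{theo:main} and convert each of its two constituent pieces — the theta value $\theta_{k-t,m}$ and the inner indefinite double-sum over $r,l$ — into Hecke-type double-sums of the form $f_{a,b,c}$. For the theta part we simply invoke Lemma \ref{lem:thetaToHecke} with $p=k-t$, which rewrites $\theta_{k-t,m}$ as $-q^{\frac18+\frac{k-t-m}2}$ times
\[ f_{1,4t-1,1}(q^{k+m+t},q^{k-t-m+1};q)-q^m f_{1,4t-1,1}(q^{k-t+m+1},q^{k-m+t};q); \]
since $(4t-1)^2-1=8t(2t-1)>0$, these are genuine Hecke-type sums.

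For the inner sum, fix a representative $k\in\{0,1,\dots,2t-1\}$ of the class modulo $2t$, and write $l=k+2tn$ with $n$ ranging over $\Z$. The key observation is that for $0\le k\le 2t-1$ one has $\sg(k+2tn)=\sg(n)$ for every $n\in\Z$ (if $n\ge 0$ then $k+2tn\ge 0$, while if $n\le -1$ then $k+2tn\le (2t-1)-2t<0$), whence $\sg(r,l)=\sg(r,n)$; this is precisely why Corollary \ref{cor:main} runs the outer sum over $k=0,\dots,2t-1$ rather than over an arbitrary complete residue system. Substituting $l=k+2tn$ into the quadratic exponent $\tfrac12 r^2+lr+\tfrac{2t-1}{4t}l^2+\tfrac12 r+\tfrac12 l$ and separating the part quadratic in $(r,n)$ from the parts linear in $(r,n)$ and constant in $(r,n)$, one checks that the surviving Gaussian in $(r,n)$ has the shape $\binom r2+2t\,rn+2t(2t-1)\binom n2$, i.e.\ that of $f_{1,2t,2t(2t-1)}$ (here $D=(2t)^2-2t(2t-1)=2t>0$). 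Reading off the linear-in-$r$ and linear-in-$n$ contributions yields the arguments $x^{-1}q^{1+k}$ and $-q^{(2t-1)(k+t)+t}$ — the sign in the second argument absorbs the discrepancy between the $(-1)^r$ in Theorem \ref{theo:main} and the $(-1)^{r+s}$ in $f_{a,b,c}$ — while the constant-in-$(r,n)$ part leaves a prefactor $q^{\frac12 k+\frac{2t-1}{4t}k^2}$. Thus the inner sum equals $q^{\frac12 k+\frac{2t-1}{4t}k^2}\,f_{1,2t,2t(2t-1)}(x^{-1}q^{1+k},-q^{(2t-1)(k+t)+t};q)$.

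It then remains only to collect powers of $q$. Multiplying the prefactor of Theorem \ref{theo:main}, the factor $q^{\frac1{4t}(k-t)^2}$, the factor $q^{\frac12 k+\frac{2t-1}{4t}k^2}$ produced by the inner sum, and the factor $-q^{\frac18+\frac{k-t-m}2}$ from Lemma \ref{lem:thetaToHecke}, the $k$-dependent exponent collapses,
\[ \frac1{4t}(k-t)^2+\tfrac12 k+\frac{2t-1}{4t}k^2+\tfrac18+\frac{k-t-m}2=\binom{k+1}2-\tfrac t4+\tfrac18-\tfrac m2, \]
and combining the remaining constant with the $-\tfrac34 t-\tfrac m2+\tfrac78$ from Theorem \ref{theo:main} gives exactly $q^{-t-m+1}$; the two minus signs (one from Theorem \ref{theo:main}, one from Lemma \ref{lem:thetaToHecke}) cancel. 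With $J_1=(q)_\infty$, this is the identity of Corollary \ref{cor:main}.

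The only step that is more than routine bookkeeping is the justification of the replacement $\sg(r,l)\mapsto\sg(r,n)$, which, as noted, is exactly what forces the choice of representatives $0\le k\le 2t-1$; the rest is a matter of tracking Gaussian exponents, matching the shapes $a\binom r2+brs+c\binom s2$, and accounting for the $q$-powers.
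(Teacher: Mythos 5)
Your proposal is correct and follows exactly the route the paper intends (the paper states the corollary as a direct rewriting of Theorem \ref{theo:main} via Lemma \ref{lem:thetaToHecke} without spelling out the computation): apply the lemma with $p=k-t$, parametrize $l=k+2tn$ with $0\le k\le 2t-1$ so that $\sg(r,l)=\sg(r,n)$, match the resulting Gaussian to $f_{1,2t,2t(2t-1)}$, and collect the powers of $q$. Your bookkeeping of the exponents, the sign in the second argument $-q^{(2t-1)(k+t)+t}$, and the final collapse to $q^{-m+1-t}/(q)_\infty^3$ all check out.
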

We again point out to the reader that the expression within the parentheses in Corollary \ref{cor:main} is essentially modular, whereas the single double-sum outside of the parentheses is mixed mock modular.  Although the results \cite[Theorem 1.3, 1.4]{HM} are unable to evaluate sums of the form
\begin{equation*}
    f_{1,2t,2t(2t-1)}(x,y;q),
\end{equation*}
Theorem \ref{theo:general-hecke} can in principle do exactly that.

\begin{example} We note that for $t=2$, the set of generalized Kontsevich--Zagier series \cite[(A.1), (A.2)]{HL} and the dual $U$-functions \cite[(A.3), (A.4)]{HL} satisfy $F_{2}^{(m)}(\zeta_{N}^{-1})=U_{2}^{(m)}(-1;\zeta_{N})$, where the later read
\begin{align*}
    U_{2}^{(1)}(x;q)
    &=\sum_{n=0}^{\infty}(-xq)_{n}(-x^{-1}q)_{n}q^{n-1}
    \sum_{k=1}^{n+1}q^{k^2}\Big [ \begin{matrix} n+k\\2k-1\end{matrix} \Big ]_{q}\notag\\
    &=1+q+(x+2+x^{-1})q^2+(2x+3+2x^{-1})q^3+(3x+6+3x^{-1})q^4+\dots\\
    U_{2}^{(2)}(x;q)
    &=\sum_{n=0}^{\infty}(-xq)_{n}(-x^{-1}q)_{n}q^{n-1}
    \sum_{k=0}^{n+1}q^{k^2}\Big [ \begin{matrix} n+k+1\\2k\end{matrix} \Big ]_{q}\notag\\
    &=q^{-1}+2+(x+2+x^{-1})q+(2x+4+2x^{-1})q^2+(4x+6+4x^{-1})q^3+\dots
\end{align*}
\end{example}
Using Corollary \ref{cor:main}, where we recall that $t$ has been replaced with $t-1$, we have for $t=2$, $m=1,2$:
\begin{align*}
(1-x)U_{2}^{(1)}(-x;q)&=\frac{q^{-3}}{(q)_{\infty}^3}
\sum_{k=0}^{5}(-1)^{k}q^{\binom{k+1}{2}}\\
&\ \ \ \ \ \cdot (f_{1,11,1}(q^{k+4},q^{k-3},q)
-qf_{1,11,1}(q^{k-1},q^{k+2},q))
 \cdot f_{1,6,30}(x^{-1}q^{1+k},-q^{5k+18},q),\notag\\
(1-x)U_{2}^{(2)}(-x;q)&=\frac{q^{-4}}{(q)_{\infty}^3}
\sum_{k=0}^{5}(-1)^{k}q^{\binom{k+1}{2}}\\
&\ \ \ \ \ \cdot (f_{1,11,1}(q^{k+5},q^{k-4},q)
-q^2f_{1,11,1}(q^{k},q^{k+1},q))
 \cdot f_{1,6,30}(x^{-1}q^{1+k},-q^{5k+18},q).\notag
\end{align*}

\begin{example} We note that for $t=3$, the set of generalized Kontsevich--Zagier series \cite[(A.5), (A.6), (A.7)]{HL} and the dual $U$-functions \cite[(A.8), (A.9), (A.10)]{HL} satisfy $F_{3}^{(m)}(\zeta_{N}^{-1})=U_{3}^{(m)}(-1;\zeta_{N})$, where the later read (where a few typos have also been corrected)
\begin{align*}
    U_{3}^{(1)}(x;q)
    &=\sum_{n=0}^{\infty}\sum_{k=1}^{n+1}\sum_{j=1}^{k}
    (-xq)_{n}(-x^{-1}q)_{n}q^{n-2+k^2+j^2}
    \Big [ \begin{matrix} k+j-1\\2j-1\end{matrix} \Big ]_{q}
    \Big [ \begin{matrix} n+k+2j-1\\2k+2j-2\end{matrix} \Big ]_{q}\notag\\
    &=1+q+(x+2+x^{-1})q^2+(2x+4+2x^{-1})q^3+(4x+7+4x^{-1})q^4+\dots\\
    U_{3}^{(2)}(x;q)
    &=\sum_{n=0}^{\infty}\sum_{k=1}^{n+1}\sum_{j=0}^{k}
    (-xq)_{n}(-x^{-1}q)_{n}q^{n-2+k^2+j^2}
    \Big [ \begin{matrix} k+j\\2j\end{matrix} \Big ]_{q}
    \Big [ \begin{matrix} n+k+2j\\2k+2j-1\end{matrix} \Big ]_{q}\notag\\
    &=q^{-1}+2+(x+3+x^{-1})q+(3x+5+3x^{-1})q^2+(5x+10+5x^{-1})q^3+\dots\\
    U_{3}^{(3)}(x;q)
    &=\sum_{n=0}^{\infty}\sum_{k=1}^{n+1}\sum_{j=0}^{k}
    (-xq)_{n}(-x^{-1}q)_{n}q^{n-2+k^2+j^2}
    \Big [ \begin{matrix} k+j\\2j\end{matrix} \Big ]_{q}
    \Big [ \begin{matrix} n+k+2j+1\\2k+2j\end{matrix} \Big ]_{q}\notag\\
    &=q^{-2}+2q^{-1}+(x+3+x^{-1})+(2x+5+2x^{-1})q+(5x+8+5x^{-1})q^2+\dots
\end{align*}
\end{example}

Using Corollary \ref{cor:main}, where we recall that $t$ has been replaced with $t-1$, we have for $t=3$, $m=1$:
\begin{align*}
(1-x)U_{3}^{(1)}(-x;q)&=\frac{q^{-4}}{(q)_{\infty}^3}
\sum_{k=0}^{7}(-1)^{k}q^{\binom{k+1}{2}}\\
&\ \ \ \ \ \cdot (f_{1,15,1}(q^{k+5},q^{k-4},q)
-qf_{1,15,1}(q^{k-2},q^{k+3},q))
 \cdot f_{1,8,56}(x^{-1}q^{1+k},-q^{7k+32},q),\\
(1-x)U_{3}^{(2)}(-x;q)&=\frac{q^{-5}}{(q)_{\infty}^3}
\sum_{k=0}^{7}(-1)^{k}q^{\binom{k+1}{2}}\\
&\ \ \ \ \ \cdot (f_{1,15,1}(q^{k+6},q^{k-5},q)
-q^2f_{1,15,1}(q^{k-1},q^{k+2},q))
 \cdot f_{1,8,56}(x^{-1}q^{1+k},-q^{7k+32},q),\\
(1-x)U_{3}^{(3)}(-x;q)&=\frac{q^{-6}}{(q)_{\infty}^3}
\sum_{k=0}^{7}(-1)^{k}q^{\binom{k+1}{2}}\\
&\ \ \ \ \ \cdot (f_{1,15,1}(q^{k+7},q^{k-6},q)
-q^3f_{1,15,1}(q^{k},q^{k+1},q))
 \cdot f_{1,8,56}(x^{-1}q^{1+k},-q^{7k+32},q).
\end{align*}

\begin{proof}[Proof of Lemma \ref{lem:thetaToHecke}] We look at $(r,s)\rightarrow (2r,2s+1), \ (2r+1,2s)$.  For the first we have
{\allowdisplaybreaks \begin{align*}
\Big ( \sum_{r,s \ge0}&-\sum_{r,s <0}\Big )(-1)^{r+s+1}
q^{\frac{1}{2}r^2+\frac{4t-1}{2}r(2s+1)+\frac{1}{8}(2s+1)^2
+(p+m)r+\frac{p-m}{2}(2s+1)}\\
&=-\Big ( \sum_{r,s \ge0}-\sum_{r,s <0}\Big )(-1)^{r+s}q^{\frac{1}{2}r^2+(4t-1)rs+\frac{4t-1}{2}r
+\frac{1}{2}s^2+\frac{1}{2}s+\frac{1}{8}
+(p+m)r+(p-m)s+\frac{p-m}{2}}\\
&=-\Big ( \sum_{r,s \ge0}-\sum_{r,s <0}\Big )(-1)^{r+s}q^{\binom{r}{2}+(4t-1)rs
+\binom{s}{2}
+(p+m+2t)r+(p-m+1)s+\frac{1}{8}+\frac{p-m}{2}}\\
&=-q^{\frac{1}{8}+\frac{p-m}{2}}f_{1,4t-1,1}(q^{p+m+2t},q^{p-m+1},q).
\end{align*}}%
For the second we have
{\allowdisplaybreaks \begin{align*}
\Big ( \sum_{r,s \ge0}&-\sum_{r,s <0}\Big )(-1)^{r+s}
q^{\frac{1}{2}r^2+\frac{1}{2}r+\frac{1}{8}+\frac{4t-1}{2}(2r+1)s+\frac{1}{2}s^2
+(p+m)r+\frac{p+m}{2}+(p-m)s}\\
&=\Big ( \sum_{r,s \ge0}-\sum_{r,s <0}\Big )(-1)^{r+s}q^{\binom{r}{2}+r+\frac{1}{8}+(4t-1)rs+\frac{4t-1}{2}s
+\binom{s}{2}+\frac{1}{2}s+(p+m)r+\frac{p+m}{2}+(p-m)s}\\
&=\Big ( \sum_{r,s \ge0}-\sum_{r,s <0}\Big )(-1)^{r+s}q^{\binom{r}{2}+(4t-1)rs
+\binom{s}{2}+(p+m+1)r+(p-m+2t)s+\frac{1}{8}+\frac{p+m}{2}}\\
&=q^{\frac{1}{8}+\frac{p+m}{2}}f_{1,4t-1,1}(q^{p+m+1},q^{p-m+2t},q).\qedhere
\end{align*}}%
\end{proof}

\section{Proof of a theta-less identity}\label{section:proof-thetaless}
Here we give a theta-less expression for the Hecke-type double sum.
As in section \ref{section:proof-hecke} we first assume $|q|^{D} < |x_1^cy_1^{-b}x_2|< 1$.
We consider
\[\begin{split}
\sum_{r_1,s_1,r_2\in\Z} & \sg(r_1,s_1)\, q^{\frac12a r_1^2+br_1s_1+\frac12cs_1^2+\frac12 cDr_2^2}\, x_1^{r_1}y_1^{s_1} x_2^{r_2}\\
&=\sum_{r_2\in\Z} q^{\frac12 cDr_2^2}\, x_2^{r_2} \sum_{r_1,s_1\in\Z} \sg(r_1,s_1)\, q^{\frac12a r_1^2+br_1s_1+\frac12cs_1^2}\, x_1^{r_1}y_1^{s_1}
\end{split}\]
and split $\sg(r_1,s_1)$ as 
\[ \sg(r_1,s_1)=\sg(r_1,cr_2-r_1)+\sg(s_1,-cr_2+r_1-1).\]
In the section \ref{section:proof-hecke} we have already computed the part coming from $\sg(r_1,cr_2-r_1)$:
\[\sum_{t=0}^{c-1} \sum_{s_1\in\Z} q^{\frac12a t^2+bts_1+\frac12cs_1^2}x_1^t y_1^{s_1}\sum_{r_2\in\Z}\frac{q^{\frac12 cDr_2^2} x_2^{r_2}}{1-x_1^cy_1^{-b}x_2 q^{D(cr_2-t)}}\]
We now consider the part coming from $\sg(s_1,-cr_2+r_1-1)$:
we substitute $r_1+cr_2$ for $r_1$ and rewrite the exponent of $q$ to get
\[\begin{split}
    \sum_{r_1,s_1,r_2\in\Z} & \sg(s_1,-cr_2+r_1-1)\, q^{\frac12a r_1^2+br_1s_1+\frac12cs_1^2+\frac12 cDr_2^2}\, x_1^{r_1}y_1^{s_1} x_2^{r_2}\\
    &= \sum_{r_1,s_1,r_2\in\Z} \sg(s_1,r_1-1)\, q^{\frac12\frac{aD} {b^2}r_1^2+\frac Db r_1s_1+\frac12 \frac c{b^2} (b^2r_2+ar_1+bs_1)^2} x_1^{r_1+cr_2}y_1^{s_1}x_2^{r_2}.
\end{split}\]
We write $r_1$ as $b^2k+u$ where $u$ is in a complete residue system modulo $b^2$ and $k\in\Z$.
Further, we write $s_1$ as $bl+v$ with $0\leq v<b$ and $l\in\Z$.
We substitute $r_2-ak-l$ for $r_2$ to get
\[ \begin{split}
\sum_{u\smod{b^2}} &\sum_{v=0}^{b-1} q^{\frac12 au^2+buv+\frac12 cv^2} x_1^uy_1^v \sum_{r_2\in\Z} q^{\frac12 b^2cr_2^2+c(au+bv)r_2}(x_1^cx_2)^{r_2} \\
&\cdot\sum_{k,l\in\Z} \sg(bl+v,b^2k+u-1)\, q^{\frac12 ab^2Dk^2+(au+bv)Dk}(x_1^Dx_2^{-a})^k \Bigl( x_1^{-c}y_1^b x_2^{-1} q^{D(b^2k+u)}\Bigr)^l.
\end{split}\]
Using $\sg(bl+v,b^2k+u-1)=\sg(l,b^2k+u-1)$ and \eqref{eq:geom}  with $q$ replaced by $q^{D}$, $n=b^2k+u-1$ and $z=x_1^{-c}y_1^{b}x_2^{-1}q^D$ we then obtain
\[ \sum_{u\smod{b^2}} \sum_{v=0}^{b-1} q^{\frac12 au^2+buv+\frac12 cv^2} x_1^uy_1^v \sum_{r_2\in\Z} q^{\frac12 b^2cr_2^2+c(au+bv)r_2}(x_1^cx_2)^{r_2} \sum_{k\in\Z} \frac{q^{\frac12 ab^2Dk^2+(au+bv)Dk}(x_1^Dx_2^{-a})^k}{1- x_1^{-c}y_1^b x_2^{-1} q^{D(b^2k+u)}}.\]
As in section \ref{section:proof-hecke} we combine the different parts, use the identity theorem and divide by 
\[ \sum_{r_2\in\Z} q^{\frac12 cDr_2^2}\, x_2^{r_2}\]
to get
\begin{theorem}
Let $a,b,c$ be positive integers such that $D:=b^2-ac>0$.  For generic $x_1$, $x_2$, $y_1$ we have:
\[\begin{split}
\sum_{r_1,s_1\in\Z} &\sg(r_1,s_1)\, q^{\frac12a r_1^2+br_1s_1+\frac12cs_1^2}\, x_1^{r_1}y_1^{s_1}\\
&=\sum_{t=0}^{c-1} \sum_{s_1\in\Z} q^{\frac12a t^2+bts_1+\frac12cs_1^2}x_1^t y_1^{s_1}\Bigl(\sum_{r_2\in\Z} q^{\frac12 cDr_2^2}\, x_2^{r_2}\Bigr)^{-1}\sum_{r_2\in\Z}\frac{q^{\frac12 cDr_2^2} x_2^{r_2}}{1-x_1^cy_1^{-b}x_2 q^{D(cr_2-t)}}\\
&\qquad +\Bigl(\sum_{r_2\in\Z} q^{\frac12 cDr_2^2}\, x_2^{r_2}\Bigr)^{-1} \sum_{u\smod{b^2}} \sum_{v=0}^{b-1} q^{\frac12 au^2+buv+\frac12 cv^2} x_1^uy_1^v\\
&\qquad \qquad \cdot \sum_{r_2\in\Z} q^{\frac12 b^2cr_2^2+c(au+bv)r_2}(x_1^cx_2)^{r_2} \sum_{k\in\Z} \frac{q^{\frac12 ab^2Dk^2+(au+bv)Dk}(x_1^Dx_2^{-a})^k}{1- x_1^{-c}y_1^b x_2^{-1} q^{D(b^2k+u)}}
\end{split}\]
\end{theorem}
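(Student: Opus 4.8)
The plan is to re-run the argument from the proof of Theorem~\ref{theo:general-hecke}, but replacing the three-term splitting of $\sg(r_1,s_1)$ by a \emph{two}-term one, so that only a single auxiliary variable $r_2$ (hence a single Jacobi-theta factor $\sum_{r_2\in\Z} q^{\frac12 cDr_2^2}x_2^{r_2}$) is introduced rather than two. Concretely, I would first restrict to the annulus $|q|^D < |x_1^cy_1^{-b}x_2| < 1$, multiply the target double sum by $\sum_{r_2\in\Z} q^{\frac12 cDr_2^2}x_2^{r_2}$, and in the resulting triple sum use $\sg(r_1,s_1) = \sg(r_1,cr_2-r_1) + \sg(s_1,-cr_2+r_1-1)$, which holds because $\sg(n-r_1) + \sg(r_1-n-1) = 0$ for every $n\in\Z$, so the extra contributions cancel in pairs.

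The part coming from $\sg(r_1,cr_2-r_1)$ requires no new work: it is exactly the part already evaluated in Section~\ref{section:proof-hecke}, by writing $r_1 = ck+t$ with $0\le t<c$, shifting $(s_1,r_2)\mapsto(s_1-bk,r_2+k)$, replacing $\sg(ck+t,cr_2-t)$ by $\sg(k,cr_2-t)$, and summing the geometric series \eqref{eq:geom} with $q$ replaced by $q^D$. The part coming from $\sg(s_1,-cr_2+r_1-1)$ is the genuinely new one. Here I would substitute $r_1+cr_2$ for $r_1$ (which turns the sign argument into simply $r_1-1$), rewrite the quadratic exponent in the completed form $\frac12\frac{aD}{b^2}r_1^2 + \frac Db r_1 s_1 + \frac12\frac c{b^2}(b^2r_2+ar_1+bs_1)^2$, then set $r_1 = b^2k+u$ with $u$ running over a residue system modulo $b^2$ and $s_1 = bl+v$ with $0\le v<b$, and substitute $r_2-ak-l$ for $r_2$. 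After replacing $\sg(bl+v,b^2k+u-1)$ by $\sg(l,b^2k+u-1)$, the $l$-sum is again summable by \eqref{eq:geom} with $q\mapsto q^D$, $n=b^2k+u-1$, $z=x_1^{-c}y_1^bx_2^{-1}q^D$; one checks that the annulus hypothesis forces $|q|^D<|z|<1$, so no convergence condition beyond the one imposed at the start is needed. The $r_2$- and $k$-sums are left untouched, which is why this theta-less identity is less compact than Theorem~\ref{theo:general-hecke}.

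Adding the two parts and dividing through by $\sum_{r_2\in\Z} q^{\frac12 cDr_2^2}x_2^{r_2}$ gives the asserted identity on the annulus; fixing $x_1,y_1$ and noting that the left side is holomorphic in $x_2$ while the right side is meromorphic there, the identity theorem removes the annulus restriction. I expect the main obstacle to be the bookkeeping in the new part: verifying the completed-form rewriting of the exponent after $r_1\mapsto r_1+cr_2$, and then checking that the decompositions $r_1=b^2k+u$, $s_1=bl+v$ convert the fractional coefficients $\frac{aD}{b^2}$, $\frac Db$, $\frac c{b^2}$ into admissible powers of $q$ and separate the $(k,l)$-sum into the geometric $l$-series times the surviving $k$-sum. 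This is purely computational care rather than a conceptual hurdle.
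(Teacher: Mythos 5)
Your proposal is correct and follows essentially the same route as the paper's own proof: the same two-term splitting $\sg(r_1,s_1)=\sg(r_1,cr_2-r_1)+\sg(s_1,-cr_2+r_1-1)$, the same substitutions $r_1\mapsto r_1+cr_2$, $r_1=b^2k+u$, $s_1=bl+v$, $r_2\mapsto r_2-ak-l$, the geometric-series evaluation of the $l$-sum, and the identity theorem to remove the annulus restriction. No gaps beyond the routine exponent bookkeeping you already flag.
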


Note that
\[ \sum_{k\in\Z} \frac{q^{\frac12 ab^2Dk^2+(au+bv)Dk}(x_1^Dx_2^{-a})^k}{1- x_1^{-c}y_1^b x_2^{-1} q^{D(b^2k+u)}}\]
actually is a level $a$ Appell function.
However, it can easily be written in terms of level 1 Appell functions and the function $m$, see for example Lemma 2 in \cite{Zw2}.

\section{A derivation of Identity (\ref{equation:f123-mxzq})} \label{section:derive-f123}

We can express the universal mock theta function $g_3(x;q)$ \cite{H1,H2} in terms of Appell functions \cite[Proposition $4.2$]{HM}:
\begin{equation}
g_3(x;q)=-x^{-1}m(q^2x^{-3},x^2;q^3)-x^{-2}m(qx^{-3},x^2;q^3),
\label{equation:gm-spec}
\end{equation}
where we have the easily shown
\begin{align}
g_3(1/x;q)&=g_3(qx;q),\label{equation:g-inv}\\
g_3(qx;q)&=-x-x^2-x^3g_3(x;q).\label{equation:g-func}
\end{align}

For $t=1$, the modularity of $(1-x)U_t(-x;q)$ is known \cite{M1}.  For $t=1$, identity (\ref{equation:gen-utm}) specialises to
\begin{equation*}
U_1(x;q)=\sum_{n= 0}^{\infty}(-xq)_n(-q/x)_nq^n.
\end{equation*}
From \cite{L}, we have (slightly rewritten) that 
\begin{equation}
1+\Big ( 1-x\Big )\sum_{n=0}^{\infty}q^{n+1}(qx)_n(q/x)_n
=-\frac{x^{-1}q^3}{(q)_{\infty}}f_{1,2,3}(x^{-1}q^3,q^6;q),\label{equation:Love-equiv}
\end{equation}
or equivalently \cite[(6.5)]{HM} that
\begin{equation*}
\Big ( 1-x\Big )\sum_{n=0}^{\infty}q^{n}(qx)_n(q/x)_n
=\frac{1}{(q)_{\infty}}f_{1,2,3}(x^{-1}q^2,q^4;q).
\end{equation*}
From \cite[$(4.9)$, $(4.7)$]{M1} we have (slightly rewritten) that 
\begin{align*}
1+\Big ( 1-x\Big )\sum_{n=0}^{\infty}q^{n+1}(q/x)_n(xq)_n
&= -\frac{1}{x}g_3(x^{-1};q)+\frac{\Theta(x^{-1};q)}{(q)_{\infty}}m(x^{-2},x;q)\\
&= -\frac{1}{x}g_3(qx;q)-x\frac{\Theta(x;q)}{(q)_{\infty}}m(x^{2},x^{-1};q).
\end{align*}
where for the last equality we have used (\ref{equation:g-inv}), (\ref{equation:j-elliptic}), and \cite[(3.2c)]{HM}.  Thus we can write
{\allowdisplaybreaks \begin{align*}
&(1-x ) U_1(-x;q)\\
&= -\frac{1}{q}-\frac{1}{qx}g_3(qx;q)-\frac{x}{q}\frac{\Theta(x;q)}{(q)_{\infty}}m(x^{2},-q;q) \\
&=-\frac{1}{q}+\frac{1}{qx}\Big [ \frac{1}{qx}m(q^{-1}x^{-3},q^2x^2;q^3)+\frac{1}{(qx)^2}m(q^{-2}x^{-3},q^2x^2;q^3)\Big ]
-\frac{x}{q}\frac{\Theta(x;q)}{(q)_{\infty}}m(x^{2},x^{-1};q)\\
&= \frac{1}{(qx)^2}m(q^{-1}x^{-3},q^2x^2;q^3)-\frac{1}{q}+\frac{1}{(qx)^3}m(q^{-2}x^{-3},q^2x^2;q^3)
-\frac{x}{q}\frac{\Theta(x;q)}{(q)_{\infty}}m(x^{2},x^{-1};q)\\
&=q^{-2}x^{-2}m(q^{-1}x^{-3},q^2x^2;q^3)-q^{-1}m(qx^{-3},q^2x^2;q^3)
-\frac{x}{q}\frac{\Theta(x;q)}{(q)_{\infty}}m(x^{2},x^{-1};q),
\end{align*}}%
where we have used (\ref{equation:gm-spec}) for the third line and \cite[(3.2c)]{HM} in the fifth line.   This gives (\ref{equation:f123-mxzq}).

\section*{Acknowledgements}
This research was supported by the Theoretical Physics and Mathematics Advancement Foundation BASIS, agreement No. 20-7-1-25-1.

\end{document}